\documentclass[11pt]{amsart}

\usepackage{amsmath, amsthm, amssymb, color, graphicx}
\usepackage[margin=1.4in]{geometry}
\usepackage{tikz}
\tikzstyle{bv}=[circle,draw=black!90,fill=black!100,thick,inner sep=2pt,minimum width=5pt]
\usepackage{hyperref}
\usepackage[T1]{fontenc}

\usepackage[latin1]{inputenc}
\usepackage{paralist}
\usepackage{subfigure}

\newtheorem{thm}{Theorem}
%\numberwithin{equation}{section}
%\numberwithin{thm}{section}
\newtheorem{theorem}[thm]{Theorem}

\newtheorem{prop}[thm]{Proposition}
\newtheorem{lemma}[thm]{Lemma}
\newtheorem{cor}[thm]{Corollary}
\newtheorem{corollary}[thm]{Corollary}

\newtheorem{claim}[thm]{Claim}

\newcommand{\cc}{{\boldsymbol{c}}}

\newcommand{\mm}{{\boldsymbol{m}}}
\newcommand{\nn}{{\boldsymbol{n}}}

\newcommand{\xx}{{\boldsymbol{x}}}
\newcommand{\zz}{{\boldsymbol{z}}}

\newcommand{\wf}{\widetilde{f}}

\newcommand{\bN}{\textbf{N}}
\newcommand{\NN}{\textbf{N}}
\newcommand{\CC}{\textbf{C}}

\newcommand{\mC}{\mathcal{C}}

\newcommand{\mT}{\mathcal{T}}
\newcommand{\mTT}{\mathcal{T}}
\newcommand{\mP}{\mathcal{P}}
\newcommand{\mL}{\mathcal{L}}
\newcommand{\mF}{\mathcal{F}}
\newcommand{\mU}{\mathcal{U}}

\DeclareMathOperator{\Cay}{\textrm{Cayley}}
\newcommand{\Cayd}{\Cay(d,\rho)}
\newcommand{\Caydp}{\Cay(d+1,d+1)}

\newcommand{\lex}{\textrm{lex}}

\newcommand{\GG}{g}
\DeclareMathOperator{\ch}{in}
\DeclareMathOperator{\inj}{inj}

\DeclareMathOperator{\roo}{root}

\newcommand{\ga}{\gamma}
\newcommand{\de}{\delta}

\newcommand{\ka}{\kappa}
\newcommand{\om}{\omega}

\newcommand{\Ga}{\Gamma}

\newcommand{\vect}[1]{\overrightarrow #1}

\newcommand{\ds}{\displaystyle}
\newcommand{\fig}[3]{\begin{figure}[h]\begin{center}\includegraphics[#1]{#2}\end{center}\caption{#3}\label{fig:#2}\end{figure}}

\title{Counting trees using symmetries}

\author{Olivier Bernardi and Alejandro H. Morales}
\thanks{O.B. aknowledges support from NSF grant DMS-1068626, ANR A3, and ERC Explore-Maps}
\date{\today}

\begin{document}
%\linenumbers
\setcounter{tocdepth}{2}

\begin{abstract}
We prove a new formula for the generating function of multitype Cayley trees counted according to their degree distribution. Using this formula we recover and extend several enumerative results about trees. In particular, we extend some results by Knuth and by Bousquet-Mélou and Chapuy about embedded trees. We also give a new proof of  the multivariate Lagrange inversion formula. 
%We also recover and extend some results of Knuth and of Bousquet-Mélou and Chapuy.
 Our strategy for counting trees is to exploit symmetries of refined enumeration formulas: proving these symmetries is easy, and once the symmetries are proved the formulas follow effortlessly. We also adapt this strategy to recover an enumeration formula of Goulden and Jackson for cacti counted according to their degree distribution.
%We present a new approach for counting trees, and we apply it to count multitype Cayley trees according to their degree distributions and to prove the multivariate Lagrange inversion formula. The gist of our approach is to exploit the symmetries of refined enumerative formulas: proving these symmetries is easy, and once the symmetries are proved the formulas follow effortlessly. Somewhat surprisingly, our formula for the generating function of multitype Cayley trees appears to be new. This formula also implies results by Knuth and by Bousquet-Mélou and Chapuy. 
%We also adapt our approach to recover known enumerative formulas for cacti counted according to their degree distribution. 
\end{abstract}

\maketitle

\section{Introduction}
The enumeration of trees is a very classical subject. For instance, there is a well-known formula for the number of \emph{unitype Cayley trees}. Recall that a unitype Cayley tree with $n$ vertices is a connected acyclic graph with vertex set $[n]=\{1,\ldots,n\}$. There are $n^{n-2}$ such trees, and there is a very simple formula for the generating function of Cayley trees counted according to their degree distribution. Namely,
\begin{eqnarray}\label{eq:Cayley-unitype}
\sum_{T\textrm{ Cayley tree}\atop \textrm{with vertex set }[n]}x_1^{\deg(1)}x_2^{\deg(2)}\cdots x_n^{\deg(n)}=x_1x_2\cdots x_n(x_1+x_2+\cdots+ x_n)^{n-2},
\end{eqnarray}
where $\deg(i)$ is the degree of vertex $i$. 

In this paper we consider \emph{multitype Cayley trees}, that is, trees in which vertices have both a \emph{type} and a \emph{label}. We obtain a formula extending~\eqref{eq:Cayley-unitype} from the unitype setting to the multitype setting (Theorem~\ref{thm:multitype-Cayley}). More precisely, our formula gives the generating function of rooted multitype Cayley trees counted according to the number of children of each type of each vertex.  
Our formula is surprisingly simple, and from it we derive many enumerative corollaries in Section~\ref{sec:applications}. In particular, we recover and extend the results of Knuth~\cite{Knuth:enumeration-trees}, and the recent results of Bousquet-Mélou and Chapuy~\cite{MBM-Chapuy:vertical-profile-trees} about ``embedded trees''. We also obtain a short proof of the multivariate Lagrange inversion formula~\cite{Gessel:multivariate-Lagrange} in Section~\ref{sec:Lagrange}.   Our strategy for counting trees is to exploit symmetries of refined enumeration formulas, and we also use this strategy in order to recover a formula of Goulden and Jackson for counting cacti according to their degree distribution in Section~\ref{sec:cacti}. 
We mention lastly that because we count trees according to their vertex degrees, our results could equivalently be stated in terms of plane trees instead of Cayley trees (see Section~\ref{sec:cacti} for a more detailed discussion). Also, our results can easily be extended in order to count rooted forests (see Corollary~\ref{cor:multitype-forest}).

In order to illustrate our approach for counting trees, we give a new proof of~\eqref{eq:Cayley-unitype}. There are already many beautiful proofs of this formula including Pr\"ufer's code bijection~\cite{Prufer:Cayley}, Joyal's endofunction approach~\cite{Joyal:Cayley}, Pitman's double counting argument~\cite{JP}, the matrix-tree theorem~\cite[Chapter 5]{JWM}, and recursive approaches~\cite[Chapter 5.3]{EC2}. Our method is different: we start by proving the ``symmetries'' in formula~\eqref{eq:Cayley-unitype} and use them at our advantage in order to enumerate Cayley trees.

First observe that a Cayley tree with $n$ vertices has $n-1$ edges, hence the degrees of its vertices sum to $2n-2$. Given a tuple of positive integers $\ga=(\ga_1,\ldots,\ga_n)$ summing to $2n-2$, we denote by $\mTT_\ga$ the set of Cayley trees with $n$ vertices such that vertex~$i$ has degree $\ga_i$ for all $i\in[n]$. We first claim that the cardinalities of the sets $\mTT_\ga$ are related to one another by simple factors:
\begin{lemma}\label{lem:symmetry-Cayley-trees}
Let $\ga=(\ga_1,\ldots,\ga_n)$ be tuple of positive integers summing to $2n-2$. Let $i, j\in[n]$ and let $\ga'=(\ga_1',\ldots,\ga_n')$ be defined by $\ga_i'=\ga_i-1$, $\ga_j'=\ga_j+1$ and $\ga_k'=\ga_k$ for $k\neq i,j$.
Then 
$$(\ga_i-1)|\mTT_\ga|=(\ga_j'-1)|\mTT_{\ga'}|.$$
\end{lemma}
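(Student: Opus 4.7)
The plan is to prove the identity bijectively. Since $\ga_j'-1=\ga_j$, the claim is $(\ga_i-1)|\mTT_\ga| = \ga_j|\mTT_{\ga'}|$. (The case $i=j$ is trivial, since then $\ga'=\ga$ and both sides coincide, so I assume $i\neq j$.) I would interpret each side as counting a set of pairs (tree, distinguished edge):
\begin{align*}
A &= \{(T,e)\mid T\in\mTT_\ga,\ e\text{ an edge of }T\text{ at }i,\ e\text{ is not the first edge of the }i\to j\text{ path in }T\},\\
B &= \{(T',e')\mid T'\in\mTT_{\ga'},\ e'\text{ an edge of }T'\text{ at }j,\ e'\text{ is not the first edge of the }j\to i\text{ path in }T'\}.
\end{align*}
Each $T\in\mTT_\ga$ contributes exactly $\ga_i-1$ pairs to $A$ (all edges at $i$ except the excluded one), and similarly each $T'\in\mTT_{\ga'}$ contributes $\ga_j'-1=\ga_j$ pairs to $B$. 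So it suffices to exhibit a bijection $\Phi:A\to B$.

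The map is the natural edge-swap. Given $(T,e)\in A$ with $e=\{i,k\}$, delete $e$; this splits $T$ into two components, and because $e$ is not on the $i$-to-$j$ path, the component containing $i$ also contains $j$, while $k$ lies in the other component, call it $C_k$. Define $T'=(T\setminus\{e\})\cup\{\{j,k\}\}$ and $e'=\{j,k\}$. Then $T'$ is a tree (we reconnected the two components by a single edge), the degrees of $i$ and $j$ change by $-1$ and $+1$, and $k$'s degree is preserved (it swaps neighbor $i$ for neighbor $j$), so $T'\in\mTT_{\ga'}$. Moreover $e'$ leads from $j$ into $C_k$, which does not contain $i$, so $e'$ is not the first edge of the $j$-to-$i$ path in $T'$. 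Hence $(T',e')\in B$.

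The inverse $\Psi:B\to A$ swaps the roles of $i$ and $j$: given $(T',e')\in B$ with $e'=\{j,k'\}$, delete $e'$ and add $\{i,k'\}$. The same component analysis shows $\Psi$ lands in $A$, and $\Psi\circ\Phi=\mathrm{id}_A$ follows by tracking where $k$ sits after each step. This yields $|A|=|B|$, which is the lemma.

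I do not expect any serious obstacle here. The only subtle step is verifying the two ``not on the path'' conditions, which are exactly what makes the two component-decompositions compatible so that $\Phi$ and $\Psi$ invert one another; once this symmetry is pinned down, the rest is bookkeeping of degrees.
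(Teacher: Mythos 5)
Your proof is correct and is essentially the paper's own argument: the paper defines $\mTT^{i,j}_\ga$ as trees with a marked edge at $i$ off the $i$--$j$ path (which is the same as your set $A$, since the only edge at $i$ on that path is its first edge) and applies the same unglue-from-$i$, glue-to-$j$ bijection. Your write-up just spells out the component analysis and the inverse more explicitly than the paper does.
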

\begin{proof}
The proof is illustrated in Figure~\ref{fig:Symmetry-Cayley}. Let $\mTT^{i,j}_\ga$ be the set of trees in $\mTT_\ga$ with a marked edge incident to vertex~$i$ not in the path between vertices~$i$ and~$j$. Clearly $|\mTT^{i,j}_{\ga}|=(\ga_i-1)|\mTT_\ga|$. Moreover, there is an obvious bijection $\Phi$ between $\mTT^{i,j}_\ga$ and $\mTT^{j,i}_{\ga'}$: given a marked tree $T\in\mTT^{i,j}_{\ga}$, the tree $\Phi(T)\in\mTT^{j,i}_{\ga'}$ is obtained by ungluing the marked edge from vertex~$i$, and gluing it to vertex~$j$.
\end{proof}

\fig{width=11cm}{Symmetry-Cayley}{The bijection $\Phi$ between the sets $\mTT^{i,j}_\ga$ and $\mTT^{j,i}_{\ga'}$.}
%% By Lemma~\ref{lem:symmetry-Cayley-trees}, it follows that 
%% $$|\mTT_\ga|={n-2 \choose \ga_1-1,\ga_2-1,\ldots,\ga_n-1}|\mTT_\ka|,$$
%% where $\ka=(n-1,1,1,\ldots,1)$. Moreover $|\mTT_\ka|=1$ (only one ``star tree''). Hence,
%% $$\displaystyle 
%% |\mTT_\ga|={n-2 \choose \ga_1-1,\ga_2-1,\ldots,\ga_n-1}.%=[x_1^{\ga_1-1}\cdots x_n^{\ga_n-1}](x_1+x_2+\cdots+x_n)^{n-2}.
%% $$
Using Lemma~\ref{lem:symmetry-Cayley-trees} repeatedly, we can express $|\mTT_\ga|$ in terms of $|\mTT_\ka|$, where $\ka=(n-1,1,1,\ldots,1)$. Indeed,
\begin{equation}\label{eq:repeat-to-star}
\begin{split}
|\mTT_\ga|&=\frac{\ga_1(\ga_1+1)\cdots (\ga_1+\ga_2-2)}{(\ga_2-1)!}|\,\mTT_{\ga_1+\ga_2-1,1,\ga_3,\ldots,\ga_n}|\\
&=\frac{\ga_1(\ga_1+1)\cdots (\ga_1+\ga_2+\cdots+\ga_n-n)}{(\ga_2-1)!\,(\ga_3-1)!\,\cdots\,(\ga_n-1)!}\,|\mTT_{\ka}|\\
&={n-2 \choose \ga_1-1,\ga_2-1,\ldots,\ga_n-1}\,|\mTT_{\ka}|.
\end{split}
\end{equation}
%$$|\mTT_\ga|=\frac{(\ga_1)(\ga_1+1)\cdots (\ga_1+\ga_2-2)}{(\ga_2-1)!}|\mTT_{\ga_1+\ga_2-1,1,\ga_3,\ldots,\ga_n}|=\ldots=\frac{(\ga_1)(\ga_1+1)\cdots (\ga_1+\ga_2+\cdots+\ga_n-n+1)}{(\ga_2-1)!(\ga_3-1)!\ldots(\ga_n-1)!}|\mTT_{\ka}|.$$
Moreover, $|\mTT_\ka|=1$ (only one ``star tree''), hence $\displaystyle |\mTT_\ga|={n-2 \choose \ga_1-1,\ga_2-1,\ldots,\ga_n-1}$.
This implies~\eqref{eq:Cayley-unitype} since this multinomial is the coefficient of $x_1^{\ga_1}x_2^{\ga_2}\cdots x_n^{\ga_n}$ in $x_1x_2\cdots x_n(x_1+x_2+\cdots+x_n)^{n-2}$.

We will now use the above philosophy for tackling more advanced counting problems of tree-like structures.

\section{The generating function of multitype Cayley trees}\label{sec:multitype-Cayley}
Let $\nn=(n_1,n_2,\ldots,n_d)$ be a tuple of non-negative integers. 
A \emph{multitype Cayley tree} of \emph{profile} $\nn$ is a tree (i.e., acyclic connected graph) with vertex set 
$$V_{\nn}=\{(t,i),~t\in[d],i\in[n_t]\}.$$
The vertex $(t,i)\in V_\nn$ is said to have \emph{type} $t$ and \emph{label} $i$. A multitype Cayley tree is represented in Figure~\ref{fig:example-multitype-Cayley}. 

\fig{width=9cm}{example-multitype-Cayley}{A multitype Cayley tree of profile $\nn=(4,3,6)$ rooted on a vertex of type $2$. The shape of the vertices indicate their type, while the numbers indicate their label. For the vertex $(1,4)$ the indegrees are $\ch_1(1,4)=1$, $\ch_2(1,4)=0$, and $\ch_3(1,4)=2$.} 

A multitype Cayley tree is said to be \emph{rooted} if one of the vertices is distinguished as the \emph{root vertex}; in this case the edges of the tree are oriented toward the root vertex. 
We denote by $\mT_\rho(\nn)$ the set of rooted multitype Cayley trees of profile $\nn$ in which the root vertex has type $\rho$. 
Given a tree $T\in \mT_\rho(\nn)$, denote by $\ch_s(t,i)$ the number of children of type $s$ of the vertex $(t,i)$. The tuple of integers $(\ch_s(t,i))_{s,t\in [d],i\in[n_t]}$ is called the \emph{indegree vector} of $T$. We now state our main result. % which gives the generating function of multitype Cayley trees counted according to their indegree vector.

\begin{thm}\label{thm:multitype-Cayley}
Let $\nn=(n_1,n_2,\ldots,n_d)$ be a tuple of positive integers, and let $\rho\in [d]$. The generating function of rooted multitype Cayley trees of profile $\nn$ with root vertex of type $\rho$ counted according to their indegree vectors is
\begin{eqnarray}\label{eq:GF-multitype-Cayley}
\sum_{T\in\mT_\rho(\nn)}~\prod_{s,t\in[d],i\in[n_t]}x_{s,t,i}^{\ch_s(t,i)}&=&\prod_{s\in[d]}\Bigg(\sum_{t\in[d],i\in[n_t]}x_{s,t,i}\Bigg)^{n_s-1}\times \Delta,
\end{eqnarray}
where 
$$\Delta=\sum_{A\in\Cayd} ~\prod_{(s,t)\in A}\Bigg(\sum_{i\in [n_t]}x_{s,t,i}\Bigg).$$
where $\Cayd$ is the set of unitype Cayley trees $A$ with vertex set $[d]$ rooted at vertex~$\rho$ and considered as oriented toward its root vertex, and the notation $(s,t)\in A$ means that the oriented edge $(s,t)$ belongs to the oriented tree $A$.
\end{thm}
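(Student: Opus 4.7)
The plan is to follow the symmetry-based strategy illustrated by Lemma~\ref{lem:symmetry-Cayley-trees}. For a valid indegree vector $\cc=(c_{s,t,i})$ compatible with $\nn$ and $\rho$, let $\mT_\rho^\cc(\nn)$ denote the trees in $\mT_\rho(\nn)$ with this indegree vector; equivalently, one wants to show that the coefficient of $\prod x_{s,t,i}^{c_{s,t,i}}$ on each side of~\eqref{eq:GF-multitype-Cayley} equals $|\mT_\rho^\cc(\nn)|$. Both sides are symmetric under permutations of the label $i$ within each fixed type (since $\sum_{t,i}x_{s,t,i}$ and $\sum_i x_{s,t,i}$ are both symmetric in this way), so one may freely use this label symmetry when reducing $\cc$ to a convenient form.

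The main tool should be a multitype analogue of Lemma~\ref{lem:symmetry-Cayley-trees}. Given two vertices $u=(t,i)$ and $v=(s,j)$ of $V_\nn$ and a type $\sigma$, consider the set of pairs $(T,e)$ where $T\in\mT_\rho^\cc(\nn)$ and $e$ is an edge from a type-$\sigma$ child of $u$ such that the subtree hanging below this child does not contain $v$. The unglue-and-reglue bijection, in the spirit of Figure~\ref{fig:Symmetry-Cayley}, sends $(T,e)$ to a pair $(T',e')$ where $T'\in\mT_\rho^{\cc'}(\nn)$ with $\cc'$ obtained from $\cc$ by transferring one $\sigma$-child from $u$ to $v$, and $e'$ is the corresponding child-edge incident to $v$. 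The path constraint is exactly what guarantees that after removing $e$, the vertex $v$ still lies in the root-component, so that re-gluing to $v$ produces a valid rooted tree of the correct profile.

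I would then use this symmetry iteratively to reduce an arbitrary $\cc$ to a canonical indegree vector for which only one tree exists, in analogy with the reduction to the ``star tree'' in~\eqref{eq:repeat-to-star}. In the multitype setting the canonical configurations correspond to assignments of a single ``parent type'' to each type; for consistency these must form a rooted Cayley tree on $[d]$ with root $\rho$, which is where $\Cay(d,\rho)$ and hence the factor $\Delta$ enters the formula. The multinomial factors accumulated along the successive applications of the symmetry lemma should then assemble into the prefactor $\prod_s(\sum_{t,i}x_{s,t,i})^{n_s-1}$, matching the shape of the right-hand side.

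The main obstacle is that, unlike in the unitype unrooted case, the cardinality of the marked set is not a clean multiple of $|\mT_\rho^\cc(\nn)|$: the exclusion of the child whose subtree contains $v$ contributes a correction depending on the ancestor/descendant relationship between $u$ and $v$ inside each tree. Controlling these correction terms, and verifying that their aggregate effect across the reduction procedure is precisely the sum over Cayley backbones $A\in\Cay(d,\rho)$ rather than some more complicated expression, is the technical heart of the argument and is where careful bookkeeping will be required.
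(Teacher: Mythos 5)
There is a genuine gap, and you have correctly located it yourself: your cross-type transfer lemma does not produce a clean counting identity, and the ``careful bookkeeping'' you defer to is precisely the step that fails. If $u=(t,i)$ and $v=(s,j)$ have different types, the number of marked edges per tree is $c_{\sigma,t,i}$ minus $0$ or $1$ depending on whether $v$ lies below a type-$\sigma$ child of $u$, so the marked set is not a constant multiple of $\mT_{\rho}^{\cc}(\nn)$ and the unglue-and-reglue map does not translate into a ratio of cardinalities. The paper's resolution, which your plan misses, is to only ever transfer children between two vertices $(t,i)$ and $(t,j)$ of the \emph{same} type $t$ (Lemma~\ref{lem:symmetry-Cayley-multitype}). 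With that restriction one marks \emph{all} $\ga_{s,t,i}$ edges to type-$s$ children of $(t,i)$, with no path exclusion, and the problematic case --- marked edge on the path between $(t,i)$ and $(t,j)$ --- is handled by a second bijection that swaps the two vertices wholesale (exchanging their incident edges and relabeling). That swap is only available because the two vertices have the same type, so the profile and the target indegree vector come out right; it is exactly the device that eliminates the correction terms you are worried about. This yields the exact identity $\ga_{s,t,i}|\mT_{\rho,\ga}|=\ga'_{s,t,j}|\mT_{\rho,\ga'}|$ with no error term.

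The second half of your plan is also not quite how the factor $\Delta$ arises, and as stated it is too vague to complete. The same-type symmetry can only concentrate, for each pair $(s,t)$, all type-$s$ children onto the single vertex $(t,1)$; it cannot move children across types, so it cannot reduce to ``one canonical tree per choice of parent-type assignment.'' The paper instead stops the reduction at \emph{star trees} (all vertices with label $\neq 1$ are leaves, Lemma~\ref{lem:tostars}), which accounts for the multinomial coefficients absorbed into $\prod_s(\sum_{t,i}x_{s,t,i})^{n_s-1}$, and then enumerates star trees directly (Lemma~\ref{lem:forstars}): the label-$1$ vertices form a core identified with a Cayley tree in $\Cayd$, and the remaining $n_s-1$ vertices of each type $s$ are leaves attached freely. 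It is this direct count of star trees --- not a further symmetry reduction --- that produces the sum over $A\in\Cayd$. To repair your argument you should replace your cross-type lemma with the same-type version (including the swap bijection for the path case) and add the explicit star-tree enumeration as a separate, non-symmetry step.
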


\noindent \textbf{Remark.} The set $\Cayd$ has cardinality $d^{d-2}$. This is the set of spanning trees of the complete graph $K_d$, with root vertex $\rho$, hence one can express the sum $\Delta$ appearing in~\eqref{eq:GF-multitype-Cayley} as a determinant by using the matrix-tree theorem~\cite[Theorem 5.6.8]{EC2}. More precisely, let $L$ be the $d\times d$ matrix with entries $\ell_{s,t}=-\sum_{i\in[n_t]}x_{s,t,i}$ if $s\neq t$, and $\ell_{s,s}=-\sum_{t\in[d],t\neq s}\ell_{s,t}$ for all $s\in[d]$. Then $\Delta$ is the determinant of the matrix obtained by deleting the $\rho$th row and $\rho$th column of $L$.\\

The case $d=1$ of Theorem~\ref{thm:multitype-Cayley} corresponds to the enumeration of unitype \emph{rooted} Cayley trees (i.e., rooted spanning trees of the complete graph $K_n$) according to the indegree of vertices. Indeed, upon setting $d=1$, $n_1=n$ and $x_{1,1,i}=x_i$ for all $i\in[n]$ in~\eqref{eq:GF-multitype-Cayley} one gets the well-known formula
\begin{equation*}
\sum_{T\textrm{ rooted Cayley tree}\atop \textrm{with vertex set }[n]}x_1^{\ch(1)}x_2^{\ch(2)}\cdots x_n^{\ch(n)}=(x_1+x_2+\cdots+ x_n)^{n-1},
\end{equation*}
which is easily seen to be equivalent to~\eqref{eq:Cayley-unitype}.
In the case $d=2$, Theorem~\ref{thm:multitype-Cayley} can be specialized to give the generating function of the spanning trees of the complete bipartite graph $K_{m,n}$ counted according to the indegree of vertices; see e.g.~\cite[Exercise 5.30]{EC2}. Indeed, upon setting $d=2$, $n_1=m$, $n_2=n$, and $x_{2,1,i}=x_i$, $x_{1,2,j}=y_j$, $x_{1,1,i}=x_{2,2,j}=0$ for all $i\in[m],j\in[n]$ one gets:
%\label{eq:GF-multitype-Cayley}
\[
\sum_{T\subset K_{m,n}}~\prod_{i\in[m]}x_{i}^{\ch(1,i)}\prod_{j\in[n]}y_{j}^{\ch(2,j)}=(x_1+x_2+\cdots+x_m)^n(y_1+y_2+\cdots+y_n)^{m-1},
\]
where the sum is over all the spanning trees of $K_{m,n}$ rooted on a vertex of type $1$. Many more applications are discussed in Section~\ref{sec:applications}.\\

Before proving Theorem \ref{thm:multitype-Cayley}, we mention a corollary about rooted forests. Recall that a \emph{forest} is an acyclic graph (hence each component is a tree), and that a forest is said to be \emph{rooted} if each connected component has a vertex distinguished as the \emph{root vertex}. We denote by $\mF(\nn)$ the set of \emph{rooted multitype forests of profile} $\nn=(n_1,\ldots,n_d)$, that is, the set of rooted forests with vertex set $\{(t,i),~t\in[d],i\in[n_t]\}$. For $F\in \mF(\nn)$ we think of each connected component of $F$ as being oriented toward its root vertex, and we denote by $\ch_s(t,i)$ the number of children of type $s$ of the vertex $(t,i)$.
\begin{cor}\label{cor:multitype-forest}
Let $\nn=(n_1,\ldots,n_d)$ be a tuple of positive integers, and let 
$$\mF_\nn(\xx,\zz)=\sum_{F\in\mF(\nn)}~\prod_{s\in[d]}z_s^{\roo_s(F)}\prod_{s,t\in[d],i\in[n_t]}x_{s,t,i}^{\ch_s(t,i)},$$
where $\roo_s(F)$ is the number of root vertices of type $s$ in the rooted forest $F$. Then,
\begin{eqnarray}\label{eq:GF-multitype-forest}
\mF_\nn(\xx,\zz)=\prod_{s\in[d]}\Bigg(z_s+\sum_{t\in[d],i\in[n_t]}x_{s,t,i}\Bigg)^{n_s-1}\times \Gamma,
\end{eqnarray}
where 
$$\Gamma=\sum_{B\in F_d} ~\Bigg(\prod_{s\textrm{ root vertex of }B }z_s \Bigg)\prod_{(s,t)\in B}\Bigg(\sum_{i\in [n_t]}x_{s,t,i}\Bigg),$$
where $F_d$ is the set of rooted forests with vertex set $[d]$ (where each connected component is thought as oriented toward its root vertex) and the notation $(s,t)\in B$ means that the oriented edge $(s,t)$ belongs to the oriented forest $B$.
\end{cor}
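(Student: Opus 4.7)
The plan is to deduce this from Theorem~\ref{thm:multitype-Cayley} via a \emph{ghost root} reduction: a rooted forest on $V_\nn$ becomes a single rooted tree once every component root is attached to one extra ``ghost'' vertex of a new type. Concretely, I would introduce a new type $d+1$ with exactly one vertex $\star := (d+1,1)$, set $\nn' := (n_1,\ldots,n_d,1)$, and apply Theorem~\ref{thm:multitype-Cayley} to $\nn'$ with root type $\rho = d+1$ after an appropriate specialization of the variables.

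The first step is to set up the bijection $\Phi : \mF(\nn) \to \mT_{d+1}(\nn')$ that sends $F$ to the tree obtained by adjoining $\star$ together with an edge $v\to\star$, oriented toward $\star$, for each root $v$ of $F$; the inverse simply deletes $\star$ and declares its children to be the new component roots. Under $\Phi$, the indegrees of the vertices of $V_\nn$ with respect to the old types $1,\ldots,d$ are unchanged; all indegrees of type $d+1$ on old vertices vanish (since $\star$ is the root, it cannot be any vertex's child); and $\ch_s(\star) = \roo_s(F)$ for $s \in [d]$.

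Next I would specialize the variables of Theorem~\ref{thm:multitype-Cayley} by setting $x_{s,d+1,1} := z_s$ for $s \in [d]$ and $x_{d+1,t,i} := 0$ for every $(t,i) \in V_{\nn'}$. The latter variables never appear on the left-hand side by the bookkeeping above, so this substitution is harmless, and it transforms the LHS into exactly $\mF_\nn(\xx,\zz)$. On the right-hand side, the factor indexed by $s \in [d]$ becomes $\bigl(z_s + \sum_{t\in[d], i\in[n_t]} x_{s,t,i}\bigr)^{n_s-1}$, matching~\eqref{eq:GF-multitype-forest}, and the factor indexed by $s = d+1$ equals $1$ since its exponent $n'_{d+1}-1$ vanishes. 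It remains to identify $\Delta$ for the profile $\nn'$ with $\Gamma$, which follows from the natural bijection $A \leftrightarrow B$ deleting the vertex $d+1$ from $A \in \Caydp$ to produce a rooted forest $B \in F_d$ on $[d]$ whose roots are the former children of $d+1$: edges $(s,d+1)\in A$ carry weight $x_{s,d+1,1}=z_s$ and correspond to $s$ being a root of $B$, while the remaining edges of $A$ are precisely the edges of $B$ and carry the same weights as in $\Gamma$.

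I do not expect a substantial conceptual obstacle; the main place to be careful is the final identification of $\Delta$ with $\Gamma$, where one must verify that using a \emph{single} ghost vertex (rather than one ghost vertex per component of $F$) produces the factor $\prod_{s\text{ root of }B} z_s$ with exactly the right multiplicity. This works because distinct component roots of $F$ become distinct edges of $A$ incident to vertex $d+1$, each contributing precisely one $z_s$ factor with the correct type.
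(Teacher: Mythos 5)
Your proposal is correct and follows essentially the same route as the paper: both attach the component roots of $F$ to a single new root vertex of type $d+1$, apply Theorem~\ref{thm:multitype-Cayley} to the profile $\nn'=(n_1,\ldots,n_d,1)$ with $\rho=d+1$ and the substitution $x_{s,d+1,1}=z_s$, and identify $\Delta$ with $\Gamma$ via the bijection between $\Cay(d+1,d+1)$ and $F_d$. Your additional bookkeeping (the vanishing $d+1$-indegrees and the trivial $s=d+1$ factor) is accurate but not a new idea.
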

\noindent \textbf{Remark.} The set $\roo_s(F)$ has cardinality $(d+1)^{d-1}$. Using the (forest version of the) matrix-tree theorem one can express the sum $\Gamma$ appearing in~\eqref{eq:GF-multitype-forest} as a determinant. More precisely $\Gamma$ is the determinant of the $d\times d$ matrix $L$ with entries $\ell_{s,t}=-\sum_{i\in[n_t]}x_{s,t,i}$ if $s\neq t$, and $\ell_{s,s}=z_s-\sum_{t\in[d],t\neq s}\ell_{s,t}$ for all $s\in[d]$.\\

\begin{proof}[Proof of Corollary \ref{cor:multitype-forest}]
There is bijection between the set $\mF(\nn)$ of rooted forests and the set  $\mT_{d+1}(\nn')$ of rooted trees where $\nn'=(n_1,\ldots,n_{d},1)$. Indeed, given a forest $F\in\mF(\nn)$ one gets a rooted tree $T\in\mT_{d+1}(\nn')$ by joining all the root vertices of $F$ to a new vertex of type $d+1$ which becomes the root vertex of $T$. Thus,
$$\mF_\nn(\xx,\zz)=\sum_{T\in\mT_{d+1}(\nn')}~\prod_{s\in[d]}z_s^{\ch_s(d+1,1)}\prod_{s,t\in[d],i\in[n_t]}x_{s,t,i}^{\ch_s(t,i)}.$$
Hence we can obtain the desired expression for $\mF_\nn(\xx,\zz)$ by using \eqref{eq:GF-multitype-Cayley} for the profile $\nn'=(n_1,\ldots,n_{d},1)$ and setting $x_{s,d+1,1}=z_s$. This immediately gives \eqref{eq:GF-multitype-forest} because, via the bijection between $F_d$ and $\Caydp$, one has
$$\Gamma= \sum_{A\in\Caydp} ~\Bigg(\prod_{(s,d+1)\in A}z_s\Bigg)\prod_{(s,t)\in A,t\in [d]}\Bigg(\sum_{i\in [n_t]}x_{s,t,i}\Bigg).$$
\end{proof}

%% The rest of this section is devoted to the proof of Theorem~\ref{thm:multitype-Cayley}. We start with the analogue of Lemma~\ref{lem:symmetry-Cayley-trees}.
The rest of this section is devoted to the proof of Theorem~\ref{thm:multitype-Cayley}. We start with the analogue of Lemma~\ref{lem:symmetry-Cayley-trees}.
Given a tuple $\ga=(\ga_{s,t,i})_{s,t\in [d],i\in[n_t]}$ of non-negative integers, we denote by $\mT_{\rho,\ga}$ the set of trees is $T\in\mT_\rho(\nn)$ having indegree vector $\ga$, that is, satisfying $\ch_s(t,i)=\ga_{s,t,i}$ for all $s,t\in[d],i\in[n_t]$.
Observe that this set is empty unless,
%\begin{eqnarray}\label{eq:compatible}
$$ n_s=\delta_{s,\rho}+\sum_{t\in[d]}\sum_{i\in[n_t]}\ga_{s,t,i},$$
%\end{eqnarray}
for all $s\in [d]$, where $\delta_{s,\rho}$ denotes the Kronecker delta.

\begin{lemma}\label{lem:symmetry-Cayley-multitype}
Let $\nn=(n_1,n_2,\ldots,n_d)$ be a tuple of positive integers and let $\ga=(\ga_{s,t,i})_{s,t\in [d],i\in[n_t]}$ be a tuple of non-negative integers. Let $s,t\in[d], i\neq j\in[n_t]$, and let $\ga'=(\ga_{s,t,i}')_{s,t\in [d],i\in[n_t]}$ be defined by $\ga_{s,t,i}'=\ga_{s,t,i}-1$, $\ga_{s,t,j}'=\ga_{s,t,j}+1$ and $\ga_{a,b,c}'=\ga_{a,b,c}$ for $(a,b,c)\notin \{(s,t,i),(s,t,j)\}$. Then 
$$\ga_{s,t,i}|\mT_{\rho,\ga}|=\ga_{s,t,j}'|\mT_{\rho,\ga'}|.$$
\end{lemma}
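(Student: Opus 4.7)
The plan is to mimic the bijective proof of Lemma~\ref{lem:symmetry-Cayley-trees}. Let $\mT_{\rho,\ga}^{s,t,i}$ denote the set of pairs $(T,v)$ where $T\in\mT_{\rho,\ga}$ and $v$ is a type-$s$ child of the vertex $(t,i)$ in $T$; then $|\mT_{\rho,\ga}^{s,t,i}|=\ga_{s,t,i}|\mT_{\rho,\ga}|$, and analogously $|\mT_{\rho,\ga'}^{s,t,j}|=\ga_{s,t,j}'|\mT_{\rho,\ga'}|$. Thus it suffices to exhibit a bijection $\Phi\colon\mT_{\rho,\ga}^{s,t,i}\to\mT_{\rho,\ga'}^{s,t,j}$.

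First I would split $\mT_{\rho,\ga}^{s,t,i}$ into two cases according to whether $(t,j)$ lies in the subtree of $v$ in the rooted tree $T$. In the easy case, where $(t,j)$ is not a descendant of $v$, the bijection is direct regrafting: set $\Phi(T,v)=(T',v)$, where $T'$ is obtained from $T$ by deleting the oriented edge $v\to(t,i)$ and inserting the oriented edge $v\to(t,j)$. Because $(t,i)$ and $(t,j)$ remain in the same connected component after the deletion, $T'$ is a valid rooted tree; only the indegrees at $(t,i)$ (which loses a type-$s$ child) and $(t,j)$ (which gains one) change, so $T'\in\mT_{\rho,\ga'}$ and $v$ is a type-$s$ child of $(t,j)$ in $T'$, as required.

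In the hard case, where $(t,j)$ is a descendant of $v$, direct regrafting would create a cycle. Let $(t,j)=w_0\to w_1\to\cdots\to w_m=v$ denote the oriented path from $(t,j)$ up to $v$ in $T$. Here the operation is a path-rotation that effectively moves the parent of $v$ from $(t,i)$ to $(t,j)$ while compensating, at each interior $w_k$, so that no indegree outside of $(t,i)$ and $(t,j)$ is disturbed. On the $\ga'$ side, the analogous hard case has $(t,i)$ as a descendant of the marked type-$s$ child of $(t,j)$, and the symmetric path-rotation provides the inverse.

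The main obstacle, and the technical core of the proof, will be making the rotation in the hard case precise enough that the output truly lands in $\mT_{\rho,\ga'}$ with the correct indegree vector at every vertex along the path, and checking that the two half-maps from the two sides really compose to identities. Once the easy and hard cases are glued, $\Phi$ is the desired bijection and the lemma follows.
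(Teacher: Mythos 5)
Your setup and your easy case coincide exactly with the paper's proof: marking a type-$s$ child of $(t,i)$ gives $|\mT^{s,t,i}_{\rho,\ga}|=\ga_{s,t,i}|\mT_{\rho,\ga}|$, the case split is the same (whether the marked edge lies on the path from $(t,j)$ to the root), and the regrafting bijection in the easy case is identical. The problem is the hard case, which is the entire content of the lemma beyond Lemma~\ref{lem:symmetry-Cayley-trees}, and which you explicitly leave unresolved. The ``path-rotation'' you sketch does not obviously exist: if you reverse or rotate the oriented path $(t,j)=w_0\to w_1\to\cdots\to w_m=v$, then each interior vertex $w_k$ exchanges a child of type $\type(w_{k-1})$ for a child of type $\type(w_{k+1})$, and since consecutive vertices on the path can have arbitrary, unequal types there is no local compensation that restores the indegree vector at $w_k$. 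Saying the map ``compensates at each interior $w_k$'' names the difficulty rather than solving it, so as written this is a genuine gap, not a deferred routine verification.

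The paper avoids the path entirely. In the hard case it leaves the shape of the tree (including the marked edge and everything hanging below $v$) in place, unglues all \emph{unmarked} in-edges of $(t,i)$ and reglues them to $(t,j)$, unglues all in-edges of $(t,j)$ and reglues them to $(t,i)$, and then swaps the labels of $(t,i)$ and $(t,j)$. Only the indegrees at $(t,i)$ and $(t,j)$ are touched, no cycle can be created, and the map is manifestly an involution up to exchanging the roles of $i$ and $j$, so $\widetilde{\Phi}_{s,t,i,j}=\widetilde{\Phi}_{s,t,j,i}^{-1}$. If you want to salvage your write-up, replace the path-rotation by this children-swap-and-relabel operation; the rest of your argument then goes through unchanged.
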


\begin{proof}
Let $\mT^{s,t,i}_{\rho,\ga}$ be the set of trees in $\mT_{\rho,\ga}$ with a marked edge joining the vertex $(t,i)$ to one of its children of type $s$. Clearly, $|\mT^{s,t,i}_{\rho,\ga}|=\ga_{s,t,i}|\mT_{\rho,\ga}|$, hence it suffices to exhibit a bijection between $\mT^{s,t,i}_{\rho,\ga}$ and $\mT^{s,t,j}_{\rho,\ga'}$. 
We first partition $\mT^{s,t,i}_{\rho,\ga}$ into two sets $\widehat{\mT}^{s,t,i}_{\rho,\ga}$ and $\widetilde{\mT}^{s,t,i}_{\rho,\ga}$ defined as follows: a tree $T\in\mT^{s,t,i}_{\rho,\ga}$ is in $\widehat{\mT}^{s,t,i}_{\rho,\ga}$ if the marked edge $e$ of $T$ is not on the path of $T$ between $(t,i)$ and $(t,j)$ (equivalently, $e$ is not on the path from $(t,j)$ to the root vertex), and the tree $T$ is in $\widetilde{\mT}^{s,t,i}_{\rho,\ga}$ otherwise. We now describe a bijection $\widehat{\Phi}_{s,t,i,j}$ between $\widehat{\mT}^{s,t,i}_{\rho,\ga}$ and $\widehat{\mT}^{s,t,j}_{\rho,\ga'}$, and a bijection $\widetilde{\Phi}_{s,t,i,j}$ between $\widetilde{\mT}^{s,t,i}_{\rho,\ga}$ and $\widetilde{\mT}^{s,t,j}_{\rho,\ga'}$. These bijections are represented in Figure~\ref{fig:Symmetry-Cayley-multitype}. 

Given a tree $T$ in $\widehat{\mT}^{s,t,i}_{\rho,\ga}$, the tree $\widehat{\Phi}_{s,t,i,j}(T)$ is obtained by ungluing the marked edge from vertex $(t,i)$ and gluing it to vertex $(t,j)$. It is clear that $\widehat{\Phi}_{s,t,i,j}=\widehat{\Phi}_{s,t,j,i}^{-1}$, hence $\widehat{\Phi}_{s,t,i,j}$ is a bijection between the sets $\widehat{\mT}^{s,t,i}_{\rho,\ga}$ and $\widehat{\mT}^{s,t,j}_{\rho,\ga'}$. 

Given a tree $T$ in $\widetilde{\mT}^{s,t,i}_{\rho,\ga}$, the tree $\widetilde{\Phi}_{s,t,i,j}(T)$ is obtained by ungluing all the unmarked edges oriented toward the vertex $(t,i)$ and gluing them to $(t,j)$, ungluing all the edges originally oriented toward the vertex $(t,j)$ and gluing them to $(t,i)$, and finally relabeling the vertex $(t,i)$ as $(t,j)$ and vice-versa; see Figure~\ref{fig:Symmetry-Cayley-multitype}. It is clear that $\widetilde{\Phi}_{s,t,i,j}=\widetilde{\Phi}_{s,t,j,i}^{-1}$, hence $\widetilde{\Phi}_{s,t,i,j}$ is a bijection between the sets $\widetilde{\mT}^{s,t,i}_{\rho,\ga}$ and $\widetilde{\mT}^{s,t,j}_{\rho,\ga'}$. 
\end{proof}
\fig{width=\linewidth}{Symmetry-Cayley-multitype}{The bijections $\widehat{\Phi}_{s,t,i,j}$ (left) and $\widetilde{\Phi}_{s,t,i,j}$ (right).}
%(a) The bijection $\widehat{\Phi}$ between the sets $\widehat{\mT}^{s,t,i}_{\rho,\ga}$ and $\widehat{\mT}^{s,t,j}_{\rho,\ga'}$. (b) The bijection $\widetilde{\Phi}$ between the sets $\widetilde{\mT}^{s,t,i}_{\rho,\ga}$ and $\widetilde{\mT}^{s,t,j}_{\rho,\ga'}$.

A multitype Cayley tree is called a \emph{star tree} if all the vertices not labeled 1 are leaves, that is, if $\ch_s(t,i)=0$ for all $s,t\in [d]$ and all $i\neq 1$. %Lemma~\ref{lem:symmetry-Cayley-multitype} reduces the problem of enumerating multitype Cayley trees to the problem of enumerating star trees. 
The following Lemma shows that the problem of enumerating multitype Cayley trees reduces to the problem of enumerating star trees.

\begin{lemma}\label{lem:tostars}
Let $\nn=(n_1,n_2,\ldots,n_d)$ be a tuple of positive integers. Let $\ga=(\ga_{s,t,i})_{s,t\in [d],i\in[n_t]}$ be a tuple of non-negative integers and let $\ga^*=(\ga^*_{s,t,i})_{s,t\in [d],i\in[n_t]}$ be defined for all $s,t\in [d]$ by $\ga^*_{s,t,1}=\sum_{i\in[n_t]}\ga_{s,t,i}$ and $\ga^*_{s,t,i}=0$ for all $i\neq 1$. The number of trees of indegree vector $\ga$ is 
\begin{eqnarray}\label{eq:reduce-to-star}
|\mT_{\rho,\ga}|&=&|\mT_{\rho,\ga^*}|\times \prod_{s,t\in[d]}{\ga^*_{s,t,1} \choose \ga_{s,t,1},\ldots,\ga_{s,t,n_t}}.
\end{eqnarray}
Equivalently, in terms of generating functions,
\begin{eqnarray}\label{eq:reduce-to-starGF}
\sum_{T\in\mT_\rho(\nn)~}\prod_{s,t\in[d],i\in[n_t]}x_{s,t,i}^{\ch_s(t,i)}&=& \sum_{T\emph{ star tree in } \mT_\rho(\nn)~}\prod_{s,t\in[d]}\Bigg(\sum_{i\in[n_t]}x_{s,t,i}\Bigg)^{\ch_s(t,1)}.
\end{eqnarray}
\end{lemma}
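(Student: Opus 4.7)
The plan is to iterate Lemma~\ref{lem:symmetry-Cayley-multitype} in order to transfer, one unit at a time, the indegree mass at vertices $(t,i)$ with $i\neq 1$ onto the vertex $(t,1)$, for each pair $(s,t)\in[d]^2$. This will convert $\ga$ into $\ga^*$ and accumulate exactly the claimed multinomial factor. The generating function version~\eqref{eq:reduce-to-starGF} will then follow from~\eqref{eq:reduce-to-star} by a direct multinomial theorem computation.

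In detail, I would first fix a pair $(s,t)\in[d]^2$ and an index $i\in\{2,\ldots,n_t\}$, and apply Lemma~\ref{lem:symmetry-Cayley-multitype} with $j=1$ repeatedly to empty vertex $(t,i)$ of its children of type~$s$. At each step, the rewriting reads
$$\ga_{s,t,i}\,|\mT_{\rho,\ga}|=(\ga_{s,t,1}+1)\,|\mT_{\rho,\ga'}|,$$
so after all $\ga_{s,t,i}$ successive moves, the ratio between the original and final cardinalities telescopes to
$$\frac{(\ga_{s,t,1}+\ga_{s,t,i})!}{\ga_{s,t,1}!\,\ga_{s,t,i}!}=\binom{\ga_{s,t,1}+\ga_{s,t,i}}{\ga_{s,t,i}}.$$
Iterating over $i=2,3,\ldots,n_t$, the partial binomial coefficients accumulate into the multinomial coefficient $\binom{\ga^*_{s,t,1}}{\ga_{s,t,1},\ldots,\ga_{s,t,n_t}}$, and finally iterating over all pairs $(s,t)\in[d]^2$ yields~\eqref{eq:reduce-to-star}. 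The only book-keeping subtlety is to check that the conditions of Lemma~\ref{lem:symmetry-Cayley-multitype} (namely $i\neq j$ with both in $[n_t]$) are met; this is automatic when $n_t\geq 2$, and when $n_t=1$ the identity is vacuously true since $\ga=\ga^*$ for the pair $(s,t)$.

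For the generating function statement~\eqref{eq:reduce-to-starGF}, I would start from
$$\sum_{T\in\mT_\rho(\nn)}\prod_{s,t,i}x_{s,t,i}^{\ch_s(t,i)}=\sum_{\ga}|\mT_{\rho,\ga}|\prod_{s,t,i}x_{s,t,i}^{\ga_{s,t,i}},$$
substitute~\eqref{eq:reduce-to-star}, group the sum over $\ga$ by their star-reduction $\ga^*$, and apply the multinomial theorem separately for each pair $(s,t)$:
$$\sum_{\ga_{s,t,1}+\cdots+\ga_{s,t,n_t}=\ga^*_{s,t,1}}\binom{\ga^*_{s,t,1}}{\ga_{s,t,1},\ldots,\ga_{s,t,n_t}}\prod_{i\in[n_t]}x_{s,t,i}^{\ga_{s,t,i}}=\Bigg(\sum_{i\in[n_t]}x_{s,t,i}\Bigg)^{\ga^*_{s,t,1}}.$$
Recognizing $\ga^*_{s,t,1}=\ch_s(t,1)$ for star trees then gives~\eqref{eq:reduce-to-starGF}.

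The main obstacle is purely combinatorial bookkeeping: one must keep the indices in the right order and verify that the repeated application of Lemma~\ref{lem:symmetry-Cayley-multitype} telescopes cleanly to the product of multinomials. There is no deep difficulty beyond carefully tracking the state of the indegree vector through the iteration; the symmetry lemma does all the real work.
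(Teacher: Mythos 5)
Your proposal is correct and follows essentially the same route as the paper: the paper also obtains \eqref{eq:reduce-to-star} by applying Lemma~\ref{lem:symmetry-Cayley-multitype} repeatedly (citing the telescoping computation of Equation~\eqref{eq:repeat-to-star} as the model) and then deduces \eqref{eq:reduce-to-starGF} by the equivalent coefficient-extraction/multinomial-theorem argument. Your write-up simply supplies the bookkeeping details that the paper leaves to the reader.
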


\begin{proof} Equation~\eqref{eq:reduce-to-star} is readily obtained by applying Lemma~\ref{lem:symmetry-Cayley-multitype} repeatedly (in the spirit of what was done in Equation~\eqref{eq:repeat-to-star}). This implies~\eqref{eq:reduce-to-starGF} since extracting the coefficient of $\prod_{s,t\in[d],i\in[n_t]}x_{s,t,i}^{\ch_s(t,i)}$ in the left-hand side of~\eqref{eq:reduce-to-starGF} gives $\mT_{\rho,\ga}$, while extracting this coefficient in the right-hand side gives
$$\sum_{T\in \mT_{\rho,\ga^*}} \prod_{s,t\in[d]}{\ga^*_{s,t,1} \choose \ga_{s,t,1},\ldots,\ga_{s,t,n_t}}.$$
\end{proof}

The following lemma completes the proof of Theorem~\ref{thm:multitype-Cayley}.
\begin{lemma}\label{lem:forstars}
Let $\nn=(n_1,n_2,\ldots,n_d)$ be a tuple of positive integers. The generating function of star trees of profile $\nn$ is given by
\begin{eqnarray}\label{eq:GF-star-trees}
\sum_{T\emph{ star tree in } \mT_\rho(\nn)~}\prod_{s,t\in[d]} y_{s,t}^{\ch_s(t,1)}=\prod_{s\in[d]}\Bigg(\sum_{t\in[d]}y_{s,t}\Bigg)^{n_s-1}\times \sum_{A\in \Cayd} ~\prod_{(s,t)\in A}y_{s,t}.
\end{eqnarray}
\end{lemma}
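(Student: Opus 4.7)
The plan is to exploit a direct structural decomposition of star trees into a ``core'' tree on $[d]$ plus independent leaf attachments, and then read off the generating function from this decomposition; no further use of the symmetry arguments from Lemma~\ref{lem:symmetry-Cayley-multitype} is required.

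First, I would observe that in a star tree $T\in\mT_\rho(\nn)$ the only vertices that can have children are the $d$ vertices $(t,1)$ for $t\in[d]$: every vertex $(s,j)$ with $j\neq 1$ is a leaf by definition of ``star tree''. Since deleting leaves from a tree always yields a tree, the subgraph of $T$ induced by $\{(t,1)\,:\,t\in[d]\}$ is connected, and under the identification $(t,1)\leftrightarrow t$ it becomes a Cayley tree on $[d]$ rooted at $\rho$ (with the orientation inherited from $T$), i.e., an element $A\in\Cayd$. I will call $A$ the \emph{core} of $T$.

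Next I would verify that the map $T\mapsto(A,(\pi_s)_{s\in[d]})$ is a bijection between star trees in $\mT_\rho(\nn)$ and pairs consisting of a core $A\in\Cayd$ together with, for each $s\in[d]$, a function $\pi_s\colon\{2,\ldots,n_s\}\to[d]$ recording, for every leaf $(s,j)$ with $j\geq 2$, the type $t=\pi_s(j)$ of its parent $(t,1)$. In one direction the extraction is as described above; in the other direction, given $A$ and the $\pi_s$, one reconstructs $T$ by building $A$ on the $(t,1)$-vertices and then attaching each leaf $(s,j)$, $j\geq 2$, to $(\pi_s(j),1)$. This reconstruction is well-defined and inverse to the extraction, because the $d$ potential parents $(t,1)$ are distinguished by their type.

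Finally I would compute the generating function through this bijection. The indegrees $\ch_s(t,1)$ decompose as a sum of two contributions: the core edges contribute $1$ to $\ch_s(t,1)$ exactly when $(s,t)\in A$, giving a factor $\prod_{(s,t)\in A}y_{s,t}$, while the leaf $(s,j)$ contributes $1$ to $\ch_s(\pi_s(j),1)$, giving a factor $y_{s,\pi_s(j)}$. Summing the leaf contribution over $\pi_s\in[d]^{n_s-1}$ factors across the leaves of each type and yields
$$\prod_{s\in[d]}\Bigg(\sum_{t\in[d]}y_{s,t}\Bigg)^{n_s-1},$$
and summing the remaining core factor over $A\in\Cayd$ gives exactly the right-hand side of~\eqref{eq:GF-star-trees}.

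Essentially no part of this argument is difficult: the only point that needs a careful sentence is the bijective decomposition in the second paragraph, i.e.\ checking that ``delete all leaves'' and ``attach each leaf to the $(t,1)$-vertex prescribed by $\pi_s$'' are inverse operations and that the orientation data matches. Once this is stated the formula is immediate.
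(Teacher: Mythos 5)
Your proof is correct and follows essentially the same route as the paper: both identify the core of a star tree (the tree obtained by deleting all leaves with label $\neq 1$) with a Cayley tree in $\Cayd$, and both recover the generating function by freely attaching the remaining $n_s-1$ leaves of each type $s$ to any of the core vertices $(t,1)$. Your write-up merely makes the bijection and its inverse slightly more explicit than the paper does.
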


\begin{proof} 
Let $T$ be a star tree in $\mT_\rho(\nn)$. Since the vertices of $T$ with labels distinct from $1$ are leaves, removing these vertices gives a tree $A$ with vertex set $\{(t,1) \mid t\in[d]\}$ and root vertex $(\rho,1)$. We call $A$ the \emph{core} of the star tree $T$ and observe that the cores identify with the Cayley trees on $[d]$. Now, any star tree in $\mT_\rho(\nn)$ is obtained by choosing a core~$A$, and then adding the leaves $(s,i)$ for $s\in[d]$ and $i\in [n_s]-\{1\}$. These leaves can be glued to any of the vertices $(1,1),(2,1),\ldots,(d,1)$ of $A$, and gluing a leaf to $(t,1)$ increases $\ch_s(t,1)$ by 1. This gives~\eqref{eq:GF-star-trees}, where $\ds \sum_{A\in \Cayd} ~\prod_{(s,t)\in A}y_{s,t}$ accounts for choosing the core $A$, and $\ds \prod_{s\in[d]}\Big(\sum_{t\in[d]}y_{s,t}\Big)^{n_s-1}$ accounts for adding the leaves.
\end{proof}

By Lemma~\ref{lem:tostars}, the generating function of multitype Cayley trees is obtained by substituting $y_{s,t}$ by $\sum_{i\in[n_t]} x_{s,t,i}$ in~\eqref{eq:GF-star-trees}. This completes the proof of Theorem~\ref{thm:multitype-Cayley}.\\

%%%%%%%%%%%%%%%%%%%%%%%%%%%%%%%%%%%%%%%%%%%%%%%%%%%%%%%%%%%%%%%%%%%%%%%

\section{Some applications}\label{sec:applications}
In this section we highlight a few consequences of Theorem~\ref{thm:multitype-Cayley}. Throughout the section, $\nn=(n_1,\ldots,n_d)$ is a fixed tuple of positive integers, and $\rho$ is in $[d]$.  We say that an edge of a tree $T\in\mT_\rho(\nn)$ has \emph{type} $(s,t)$ if it goes from a vertex of type $s$ to a vertex of type $t$. Here is a summary of the results obtained below:
\begin{compactitem}
\item In Proposition~\ref{prop:trees-by-number-edges} we recover a result of Bousquet,  Chauve, Labelle and Leroux~\cite[Proposition 2]{Leroux:multivariate-Lagrange} by enumerating trees in $\mT_{\rho}(\nn)$ with prescribed number $m_{s,t}$ of edges of type $(s,t)$.
\item  In  Proposition~\ref{prop:embedded} we recover a result by Knuth~\cite{Knuth:enumeration-trees} by counting trees in $\mT_\rho(\nn)$ such that the edges all have a type in a prescribed set $D\subset [d]^2$. These are called \emph{$D$-embedded trees}.
\item In Proposition~\ref{prop:injective-trees} we enumerate \emph{injective trees} in $\mT_{\rho}(\nn)$, that is, trees such that every vertex has at most one child of each type. Our formula generalizes a result by  Bousquet-Mélou and Chapuy~\cite[Theorem 6]{MBM-Chapuy:vertical-profile-trees}.
\item In Proposition~\ref{prop:injective-embedded-trees}, we enumerate injective $D$-embedded trees in $\mT_{\rho}(\nn)$ .
\item In Proposition~\ref{prop:number-trees-N} we enumerate trees in $\mT_{\rho}(\nn)$ with prescribed number $N_{t,\cc}$ of vertices of type $t$ having {\em indegree vector} $\cc=(c_1,\ldots,c_d)$ (that is, having $c_s$ children of type $s$). Our formula generalizes~\cite[Theorem 7]{MBM-Chapuy:vertical-profile-trees}.
\item In Proposition~\ref{prop:complete-type} we enumerate trees in $\mT_{\rho}(\nn)$ with prescribed number $N_{t,u,\cc}$ of vertices of type $t$ having a parent of type $u$ and indegree vector $\cc$. Our formula answers a question raised in~\cite{MBM-Chapuy:vertical-profile-trees}.\\
\end{compactitem}
We mention that a version of these results could be obtained for rooted forests using Corollary~\ref{cor:multitype-forest}.

Throughout the section, we denote by $\Cayd$ the set of unitype Cayley trees with vertex set $[d]$ rooted at vertex~$\rho$ and considered as oriented toward their root vertex, and for a tree $A\in\Cayd$ the notation $(s,t)\in A$ indicates that the oriented edge $(s,t)$ belongs to the oriented tree $A$.  We first count multitype Cayley trees according to the number of edges of each type. For a tuple $\mm=(m_{s,t})_{s,t\in[d]}$ of non-negative integers, we denote by $\mT_{\rho}(\mm,\nn)$ the set of trees in $\mT_{\rho}(\nn)$ having $m_{s,t}$ edges of type $(s,t)$. In order for $\mT_{\rho}(\mm,\nn)$ to be non-empty we must impose 
\begin{eqnarray}\label{eq:compatible}
\textrm{for all } s\in[d],\quad n_s=\de_{s,\rho}+\sum_{t\in[d]}m_{s,t},
\end{eqnarray}
where $\de_{s,\rho}$ denotes the Kronecker delta. The following result was obtained by Bousquet, Chauve, Labelle and Leroux in~\cite[Proposition 2]{Leroux:multivariate-Lagrange}.
\begin{prop}[\cite{Leroux:multivariate-Lagrange}]\label{prop:trees-by-number-edges}
Let $\mm=(m_{s,t})_{s,t\in[d]}$ be a tuple of non-negative integers. The number of trees in $\mT_\rho(\nn)$ with $m_{s,t}$ edges of type $(s,t)$ for all $s,t\in [d]$ is 
%% \begin{equation*}
%% |\mT_{\rho}(\mm,\nn)|=\prod_{t\in[d]}(n_t-1)!\,\, \prod_{s,t\in[d]}\frac{n_t^{m_{s,t}}}{m_{s,t}!}\times\sum_{A\in \Cayd~}\prod_{(s,t)\in A}m_{s,t}
%% \end{equation*}
\begin{equation}\label{eq:trees-by-number-edges}
|\mT_{\rho}(\mm,\nn)|=\Bigg(\prod_{s,t\in[d]}n_t^{m_{s,t}}\Bigg)\Bigg( \frac{\prod_{s\in[d]}(n_s-1)!}{\prod_{s,t\in[d]}m_{s,t}!}\Bigg)\times\sum_{A\in \Cayd~}\prod_{(s,t)\in A}m_{s,t}
\end{equation}
if~\eqref{eq:compatible} holds and $|\mT_{\rho}(\mm,\nn)|=0$ otherwise.
\end{prop}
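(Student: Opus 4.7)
The plan is to read off Proposition~\ref{prop:trees-by-number-edges} as a direct coefficient extraction from Theorem~\ref{thm:multitype-Cayley}. First I would observe that the number of edges of type $(s,t)$ in a tree $T\in\mT_\rho(\nn)$ is exactly $\sum_{i\in[n_t]}\ch_s(t,i)$. So specializing $x_{s,t,i}=x_{s,t}$ (independent of $i$) in formula~\eqref{eq:GF-multitype-Cayley} collapses the indegree generating function into an edge-type generating function:
$$\sum_{T\in\mT_\rho(\nn)}\prod_{s,t\in[d]}x_{s,t}^{\#\text{edges of type }(s,t) \text{ in }T}=\prod_{s\in[d]}\Bigg(\sum_{t\in[d]}n_t\,x_{s,t}\Bigg)^{n_s-1}\times\sum_{A\in\Cayd}\prod_{(s,t)\in A}n_t\,x_{s,t}.$$
Thus $|\mT_\rho(\mm,\nn)|$ is simply the coefficient of $\prod_{s,t}x_{s,t}^{m_{s,t}}$ in the right-hand side.

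Next I would extract that coefficient term-by-term in $A$. For each $A\in\Cayd$ the factor $\prod_{(s,t)\in A}n_t\,x_{s,t}$ pre-fills one copy of $x_{s,t}$ on each edge of $A$, so one must extract the coefficient of $\prod_{s,t}x_{s,t}^{m_{s,t}-[(s,t)\in A]}$ in $\prod_s(\sum_t n_t\,x_{s,t})^{n_s-1}$. Because every non-root vertex of $A$ has exactly one parent, $\sum_t[(s,t)\in A]$ equals $1$ for $s\neq\rho$ and $0$ for $s=\rho$; combined with the compatibility condition~\eqref{eq:compatible} this gives $\sum_t(m_{s,t}-[(s,t)\in A])=n_s-1$, which is exactly what the multinomial theorem requires. (If $\mm,\nn$ are not compatible, the original coefficient is empty, matching $|\mT_\rho(\mm,\nn)|=0$.)

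Finally I would clean up the algebra. The multinomial theorem yields
$$|\mT_\rho(\mm,\nn)|=\sum_{A\in\Cayd}\Bigg(\prod_{s,t}n_t^{m_{s,t}}\Bigg)\prod_{s\in[d]}\binom{n_s-1}{\bigl(m_{s,t}-[(s,t)\in A]\bigr)_{t\in[d]}},$$
and the identity $(m_{s,t}-[(s,t)\in A])!=m_{s,t}!/m_{s,t}^{[(s,t)\in A]}$ rewrites the inner product of multinomials as $\bigl(\prod_s(n_s-1)!/\prod_{s,t}m_{s,t}!\bigr)\prod_{(s,t)\in A}m_{s,t}$. Pulling out the $A$-independent factor then gives formula~\eqref{eq:trees-by-number-edges} exactly.

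This approach has no conceptual obstacle; the only place one has to be careful is checking that the multinomial expansion applies for every $A$, i.e.\ that the row sums $\sum_t(m_{s,t}-[(s,t)\in A])$ are all equal to $n_s-1$ and that the summand naturally vanishes when some $m_{s,t}=0$ with $(s,t)\in A$ (which the factor $\prod_{(s,t)\in A}m_{s,t}$ in the final formula also kills, so the two descriptions agree).
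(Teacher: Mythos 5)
Your proposal is correct and follows essentially the same route as the paper: specialize $x_{s,t,i}=x_{s,t}$ in \eqref{eq:GF-multitype-Cayley}, extract the coefficient of $\prod_{s,t}x_{s,t}^{m_{s,t}}$ term-by-term in $A$, and use the fact that each non-root $s$ has exactly one outgoing edge in $A$ together with the multinomial theorem to produce the factor $\bigl(\prod_{s}(n_s-1)!/\prod_{s,t}m_{s,t}!\bigr)\prod_{(s,t)\in A}m_{s,t}$. The bookkeeping of the row sums via \eqref{eq:compatible} and the degenerate case $m_{s,t}=0$ with $(s,t)\in A$ are handled correctly.
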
 
\noindent \textbf{Remark.} 
By the matrix-tree theorem, the sum over Cayley trees in~\eqref{eq:trees-by-number-edges} can be expressed as a determinant. More precisely, this sum is the determinant of the matrix obtained by deleting the $\rho$th row and $\rho$th column of the $d\times d$ matrix $L$ having entries $\ell_{s,s}=\sum_{t\in[d],t\neq s}m_{s,t}$ for all $s\in[d]$, and $\ell_{s,t}=-m_{s,t}$ for all $t\neq s\in[d]$.
Similar determinantal expressions exist for the formulas given in Propositions \ref{prop:embedded} to \ref{prop:complete-type} and are omitted.\\

\begin{proof}
Setting $x_{s,t,i}=x_{s,t}$ for all $i$ in~\eqref{eq:GF-multitype-Cayley} gives
$$\sum_{T\in\mT_{\rho}(\nn)} \prod_{s,t \in [d]} x_{s,t}^{\#\textrm{edges of type }(s,t)}=\prod_{s\in[d]}\bigg(\sum_{t\in [d]}n_t x_{s,t}\bigg)^{n_s-1}\times \sum_{A\in\Cayd~} \prod_{(s,t)\in A}n_t x_{s,t},$$
and extracting the coefficient of $ \prod_{s,t\in [d]}x_{s,t}^{m_{s,t}}$ in this equation gives 
$$|\mT_{\rho}(\mm,\nn)|=\Bigg(\prod_{s,t\in[d]}n_t^{m_{s,t}}\Bigg)\sum_{A\in \Cayd~}\prod_{s\in[d]}\Bigg[\prod_{t\in[d]}x_{s,t}^{m_{s,t}}\Bigg]\bigg(\sum_{t\in [d]}n_t x_{s,t}\bigg)^{n_s-1}\prod_{t,(s,t)\in A}x_{s,t},$$
where the bracket notation means extraction of coefficients. Moreover, for any $s\in[d]$ there is at most one $t$ with $(s,t)\in A$ (this is the parent of $s$ in $A$) so that 
$$\Bigg[\prod_{t\in[d]}x_{s,t}^{m_{s,t}}\Bigg]\bigg(\sum_{t\in [d]}n_t x_{s,t}\bigg)^{n_s-1}\prod_{t,(s,t)\in A}x_{s,t}=\frac{\prod_{s\in[d]}(n_s-1)!}{\prod_{s,t\in[d]}m_{s,t}!}\prod_{t,(s,t)\in A} m_{s,t},$$
provided \eqref{eq:compatible} holds. This gives~\eqref{eq:trees-by-number-edges}.
\end{proof}

%% \begin{equation}%\nonumber
%% |\mT_{\rho}(\mm,\nn)|=\prod_{t\in[d]}^d(n_t-1)!\,\, \prod_{s,t\in[d]}\frac{n_t^{m_{s,t}}}{m_{s,t}!}\times\sum_{A\in \Cayd~}\prod_{(s,t)\in A}m_{s,t}.
%% \end{equation}
%% This result was already given in~\cite[Proposition 2]{Leroux:multivariate-Lagrange}. %It generalizes the result given in~\cite[Theorem 5]{MBM-Chapuy:vertical-profile-trees}. 

Next, we count \emph{embedded trees}. Given a subset $D$ of $[d]^2$, we say that a multitype Cayley tree $T$ is \emph{embedded} in~$D$ if the type of every edge of $T$ belongs to~$D$. The reason for this terminology is that $D$ can be seen as a digraph with vertex set $[d]$ and if the vertices of $T$ are sent to the vertices of $D$ corresponding to their type (i.e., $(t,i) \mapsto t$), then the edges of $T$ are sent to edges of $D$. Let $\mT_{\rho,D}(\nn)$ be the set of trees in $\mT_\rho(\nn)$ embedded in $D$. The result proved by Knuth in~\cite{Knuth:enumeration-trees} (using a variation of Joyal's endofunction technique~\cite{Joyal:Cayley}) is the following enumeration of $\mT_{\rho,D}(\nn)$.
\begin{prop}[\cite{Knuth:enumeration-trees}] \label{prop:embedded}
Let $D\subseteq [d]^2$. The number of trees in $\mT_\rho(\nn)$ embedded in $D$ is 
\begin{equation*}
|\mT_{\rho,D}(\nn)|=\prod_{s\in[d]}\Bigg(\sum_{(s,t)\in D}n_t\Bigg)^{n_s-1}\times \sum_{A\in\Cayd,~ A\subseteq D~}\prod_{(s,t)\in A}n_t.
\end{equation*} 
\end{prop}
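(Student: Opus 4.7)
The plan is to deduce the formula directly from Theorem~\ref{thm:multitype-Cayley} by a single specialization of the variables $x_{s,t,i}$. Namely, I would set
$$x_{s,t,i} = \begin{cases} 1 & \text{if } (s,t)\in D,\\ 0 & \text{otherwise,}\end{cases}$$
for all $s,t\in[d]$ and $i\in[n_t]$.

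First I would check that, with this specialization, the left-hand side of~\eqref{eq:GF-multitype-Cayley} reduces to $|\mT_{\rho,D}(\nn)|$. For any $T\in\mT_\rho(\nn)$, the monomial $\prod_{s,t,i} x_{s,t,i}^{\ch_s(t,i)}$ is $1$ if every child-edge of $T$ has a type in $D$ (i.e., $T$ is embedded in $D$), and $0$ otherwise; so the sum collapses to the count of $D$-embedded trees.

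Next I would simplify the two factors on the right-hand side. For fixed $s$, the inner sum becomes $\sum_{t\in[d],\,i\in[n_t]} x_{s,t,i} = \sum_{t\colon(s,t)\in D} n_t = \sum_{(s,t)\in D} n_t$, which gives the prefactor $\prod_{s\in[d]}\bigl(\sum_{(s,t)\in D} n_t\bigr)^{n_s-1}$. In the definition of $\Delta$, the factor $\sum_{i\in[n_t]} x_{s,t,i}$ equals $n_t$ when $(s,t)\in D$ and vanishes otherwise, so only Cayley trees $A\in\Cayd$ with $A\subseteq D$ contribute and each contributes $\prod_{(s,t)\in A} n_t$. This yields exactly the claimed formula.

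There is no real obstacle here; the only thing to be careful about is the empty-product/zero-term bookkeeping, namely verifying that any tree $A\in\Cayd$ using even a single edge $(s,t)\notin D$ is killed by the factor $0$ in $\Delta$, so that the surviving sum is literally over $\{A\in\Cayd : A\subseteq D\}$. Once this is observed the specialization of Theorem~\ref{thm:multitype-Cayley} delivers the proposition immediately.
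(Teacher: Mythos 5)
Your proposal is correct and is exactly the paper's proof: the paper likewise obtains Proposition~\ref{prop:embedded} by setting $x_{s,t,i}=1$ if $(s,t)\in D$ and $x_{s,t,i}=0$ otherwise in~\eqref{eq:GF-multitype-Cayley}. You simply spell out the bookkeeping that the paper leaves implicit.
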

\begin{proof}
Setting $x_{s,t,i}=1$ if $(s,t)\in D$ and $x_{s,t,i}=0$ otherwise in~\eqref{eq:GF-multitype-Cayley} directly gives the result.
\end{proof}
%This is the result proved by Knuth in~\cite{Knuth:enumeration-trees}.
%This generalizes the result given in~\cite[Theorem 3]{MBM-Chapuy:vertical-profile-trees} and a result of Andrea Sportiello (private communication) reported in the same article.~\cite{Knuth:enumeration-trees}
 
Next, we count {\em injective trees} and {\em embedded injective trees}. A multitype Cayley tree $T$ is said to be \emph{injective} if every vertex has at most one child of each type. Injective trees were introduced by Bousquet-Mélou and Chapuy in~\cite{MBM-Chapuy:vertical-profile-trees} in order to study the so-called \emph{vertical profile} of trees. Let $\mT^{\inj}_{\rho}(\mm,\nn)$ (resp. $\mT^{\inj}_{\rho,D}(\nn)$) be the subset of injective trees in $\mT_{\rho}(\mm,\nn)$ (resp. $\mT_{\rho,D}(\nn)$).
%% OB. Should we give an example of vertical profile of trees?
The following result generalizes~\cite[Theorem 6]{MBM-Chapuy:vertical-profile-trees}. 
\begin{prop} \label{prop:injective-trees}
Let $\mm=(m_{s,t})_{s,t\in[d]}$ be a tuple of non-negative integers. The number of injective trees in $\mT_\rho(\nn)$ with $m_{s,t}$ edges of type $(s,t)$ for all $s,t\in [d]$ is 
\begin{equation*}%\nonumber
|\mT^{\inj}_{\rho}(\mm,\nn)|=\prod_{s,t\in[d]} {n_t \choose m_{s,t}}\,\, \prod_{s\in[d]} (n_s-1)! \times\sum_{A\in\Cayd~}\prod_{(s,t)\in A}m_{s,t}.
\end{equation*}
if~\eqref{eq:compatible} holds and $|\mT^{\inj}_{\rho}(\mm,\nn)|=0$ otherwise.
\end{prop}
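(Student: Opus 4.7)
The plan is to extract the appropriate coefficient from the generating function of Theorem~\ref{thm:multitype-Cayley}. A tree $T\in\mT_\rho(\nn)$ is injective with $m_{s,t}$ edges of type $(s,t)$ precisely when its indegree vector $(\ch_s(t,i))$ satisfies $\ch_s(t,i)\in\{0,1\}$ and, for every $s,t$, the set $S_{s,t}=\{i : \ch_s(t,i)=1\}$ has size $m_{s,t}$. Hence $|\mT^{\inj}_\rho(\mm,\nn)|$ equals the sum over all such families $(S_{s,t})$ of the coefficient of the squarefree monomial $M_S=\prod_{s,t}\prod_{i\in S_{s,t}} x_{s,t,i}$ in the right-hand side of~\eqref{eq:GF-multitype-Cayley}. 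Since that right-hand side is invariant under relabeling within each type, this coefficient depends only on the sizes $|S_{s,t}|=m_{s,t}$, so the total count equals $\prod_{s,t}\binom{n_t}{m_{s,t}}$ times the coefficient for any single admissible $S$.

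For a fixed $A\in\Cayd$ with parent function $p_A$, I would analyze how the monomial $M_S$ is assembled by grouping variables by their first index $s$. On one hand $M_S$ contains $\sum_t m_{s,t}$ distinct variables $x_{s,\cdot,\cdot}$, which by~\eqref{eq:compatible} equals $n_s-1$ if $s=\rho$ and $n_s$ otherwise. On the other hand the factor $\bigl(\sum_{t,i}x_{s,t,i}\bigr)^{n_s-1}$ contributes at most $n_s-1$ such variables to any squarefree monomial, while the $A$-factor contributes exactly one variable of the form $x_{s,p_A(s),j}$ when $s\neq\rho$, and none when $s=\rho$. The deficit forces, for each $s\neq\rho$, the index $j$ to lie in $S_{s,p_A(s)}$, giving $m_{s,p_A(s)}$ choices; once $j$ is fixed, the first factor must expand to the $s$-slice of $M_S$ with the variable already supplied by $A$ removed, which contributes a multinomial coefficient of $(n_s-1)!$. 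For $s=\rho$ the first factor must produce the full $\rho$-slice of $M_S$ on its own, again contributing $(n_\rho-1)!$.

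Multiplying across $s$ yields $\prod_s(n_s-1)!\cdot\prod_{(s,t)\in A}m_{s,t}$ for each $A$ and each admissible $S$. Summing over $A\in\Cayd$ and over the $\prod_{s,t}\binom{n_t}{m_{s,t}}$ choices of $(S_{s,t})$ gives exactly the formula in the proposition. I expect the main obstacle to be the bookkeeping of how the two factors in~\eqref{eq:GF-multitype-Cayley} jointly produce each squarefree monomial, and in particular making sure the $s=\rho$ case (where the $A$-factor contributes nothing) and the $s\neq\rho$ case (where it must supply exactly one variable) are handled cleanly and consistently with~\eqref{eq:compatible}; when~\eqref{eq:compatible} fails the extracted coefficient is automatically zero, recovering the vanishing clause of the statement.
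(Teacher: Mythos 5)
Your proposal is correct and follows essentially the same route as the paper: fix the sets $S_{s,t}$ of labels of type-$t$ vertices having a child of type $s$ (contributing $\prod_{s,t}\binom{n_t}{m_{s,t}}$), then extract the coefficient of the corresponding squarefree monomial from the right-hand side of~\eqref{eq:GF-multitype-Cayley}. The only difference is that you spell out the coefficient computation that the paper dismisses as ``easily seen,'' and your bookkeeping (the degree count forcing the $A$-factor's variable into $S_{s,p_A(s)}$ for $s\neq\rho$, and the $(n_s-1)!$ from the squarefree expansion of each power) is accurate.
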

%Proposition \label{prop:injective-trees} generalizes~\cite[Theorem 6]{MBM-Chapuy:vertical-profile-trees}. 
\begin{proof}
In order to choose a tree in $\mT^{\inj}_{\rho}(\mm,\nn)$, one must first choose for all $s,t\in[d]$, the set $M_{s,t}\subseteq [n_t]$ of labels of the $m_{s,t}$ vertices of type $t$ having a child of type $s$. There are $\prod_{s,t\in[d]} {n_t \choose m_{s,t}}$ such choices. Moreover, the number of trees in $\mT^{\inj}_{\rho}(\mm,\nn)$ corresponding to a given choice $(M_{s,t})_{s,t\in[d]}$ of labels is the coefficient of 
$\prod_{s,t\in[d],i\in M_{s,t}}x_{s,t,i}$ in~\eqref{eq:GF-multitype-Cayley} which is easily seen to be
$\ds\prod_{s\in[d]} (n_s-1)! \times\sum_{A\in\Cayd~}\prod_{(s,t)\in A}m_{s,t}$, provided~\eqref{eq:compatible} holds.
\end{proof}

The following result generalizes~\cite[Theorem 4]{MBM-Chapuy:vertical-profile-trees}.
\begin{prop} \label{prop:injective-embedded-trees}
Let $D\subseteq [d]^2$. The number of injective trees in $\mT_\rho(\nn)$ embedded in $D$ is 
\begin{equation*}
|\mT^{\inj}_{\rho,D}(\nn)|=\prod_{s\in[d]}{\delta_{s,\rho}-1+\sum_{t\in D(s)}n_t\choose n_s-1}(n_s-1)!\times\sum_{A\subseteq D \textrm{ in }\Cayd~}\prod_{(s,t)\in A}n_t.
\end{equation*}
where $D(s)=\{t\in [d]\mid (s,t)\in D\}$. 
\end{prop}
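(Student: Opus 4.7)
My plan is to derive Proposition~\ref{prop:injective-embedded-trees} directly from Proposition~\ref{prop:injective-trees} by summing over all admissible edge-type vectors supported in $D$. Since an injective tree embedded in $D$ has some tuple $\mm=(m_{s,t})$ of edge-type counts with $m_{s,t}=0$ whenever $(s,t)\notin D$, and such $\mm$ must satisfy the compatibility condition~\eqref{eq:compatible}, i.e. $\sum_{t\in D(s)}m_{s,t}=n_s-\delta_{s,\rho}$, I would write
$$|\mT^{\inj}_{\rho,D}(\nn)|=\sum_\mm |\mT^{\inj}_\rho(\mm,\nn)|=\prod_{s\in[d]}(n_s-1)!\sum_{A\in\Cayd}\sum_{\mm}\prod_{s,t\in[d]}\binom{n_t}{m_{s,t}}\prod_{(s,t)\in A}m_{s,t},$$
after exchanging the order of summation between $A$ and $\mm$.

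The next step is the key observation that the sum collapses to $A\subseteq D$: whenever $A\in\Cayd$ contains an edge $(s,t)\notin D$, the factor $m_{s,t}$ forces the entire summand to vanish because $m_{s,t}=0$ for every admissible $\mm$. For the remaining $A\subseteq D$, the inner sum factors independently over $s$, since the constraints on $\mm_s=(m_{s,t})_{t\in D(s)}$ decouple: in the rooted tree $A$, each $s\neq\rho$ has a unique parent $p_A(s)$, contributing a distinguished factor $m_{s,p_A(s)}$, while $s=\rho$ contributes nothing extra.

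The main (essentially routine) calculation is then to evaluate each local sum using the Vandermonde identity. For $s=\rho$, Vandermonde directly gives $\binom{\sum_{t\in D(\rho)}n_t}{n_\rho-1}$. For $s\neq\rho$, I would first apply $m\binom{n}{m}=n\binom{n-1}{m-1}$ to the distinguished factor $m_{s,p_A(s)}\binom{n_{p_A(s)}}{m_{s,p_A(s)}}$, shift the index, and then apply Vandermonde to obtain $n_{p_A(s)}\binom{-1+\sum_{t\in D(s)}n_t}{n_s-1}$. These two cases combine uniformly as $\binom{\delta_{s,\rho}-1+\sum_{t\in D(s)}n_t}{n_s-1}$ through the Kronecker delta.

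Finally, assembling the factors and using $\prod_{s\neq\rho}n_{p_A(s)}=\prod_{(s,t)\in A}n_t$ yields the claimed formula. The only genuine obstacle is keeping the asymmetric role of the root $s=\rho$ straight through the Vandermonde manipulation; once the $\delta_{s,\rho}$ bookkeeping is set up carefully, the rest is arithmetic.
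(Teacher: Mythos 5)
Your proof is correct, but it takes a genuinely different route from the paper. The paper proves Proposition~\ref{prop:injective-embedded-trees} directly from Theorem~\ref{thm:multitype-Cayley}: it specializes the generating function~\eqref{eq:GF-multitype-Cayley} by killing the variables $x_{s,t,i}$ with $(s,t)\notin D$, observes that injective trees correspond exactly to square-free monomials, and then counts the square-free monomials in the resulting product by an elementary choice argument (choose the variable $y_s$ coming from the core tree $A$, then choose and order $n_s-1$ further distinct variables from the remaining $\delta_{s,\rho}-1+\sum_{t\in D(s)}n_t$ ones). You instead take Proposition~\ref{prop:injective-trees} as the starting point and sum over all edge-type vectors $\mm$ supported in $D$ satisfying~\eqref{eq:compatible}; your observations that the sum over $A$ collapses to $A\subseteq D$ (via the vanishing factor $m_{s,t}=0$), that the inner sum factors over $s$, and that the local sums evaluate by the absorption identity plus Vandermonde to $n_{p_A(s)}\binom{\delta_{s,\rho}-1+\sum_{t\in D(s)}n_t}{n_s-1}$ are all correct, and the final reassembly $\prod_{s\neq\rho}n_{p_A(s)}=\prod_{(s,t)\in A}n_t$ is right. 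The paper's argument is shorter and self-contained relative to Theorem~\ref{thm:multitype-Cayley}; yours has the merit of exhibiting Proposition~\ref{prop:injective-embedded-trees} as the Vandermonde convolution of Proposition~\ref{prop:injective-trees}, which also serves as a consistency check between the two statements, at the cost of a slightly heavier computation.
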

\begin{proof}
By~\eqref{eq:GF-multitype-Cayley}, $|\mT^{\inj}_{\rho,D}(\nn)|$ is the number of square-free monomials in the expansion of 
 $$ \prod_{s\in[d]}\bigg(\sum_{t \in D(s),i\in[n_t]}x_{s,t,i}\bigg)^{n_s-1}\times\sum_{A\subseteq D \textrm{ in }\Cayd~} ~\prod_{(s,t)\in A~}\bigg(\sum_{i\in [n_t]}x_{s,t,i}\bigg).$$
Expanding the sum over $A$ gives a sum of 
$$\sum_{A\subseteq D \textrm{ in }\Cayd~}\prod_{(s,t)\in A}n_t$$ 
square-free monomials of the form $ \prod_{s\in[d]\setminus \{\rho\}} y_s$, with $y_s$ in $X_s=\{x_{s,t,i}\}_{t\in D(s),i\in[n_t]}$. Moreover, there are clearly
$$\prod_{s\in[d]}{\delta_{s,\rho}-1+\sum_{t\in D(s)}n_t\choose n_s-1}(n_s-1)!$$
square-free monomials not containing the variable $y_s$ for all $s\in[d]\setminus \{\rho\}$ in the expansion of 
$$\prod_{s\in[d]}\bigg(\sum_{t\in D(s),i\in[n_t]}x_{s,t,i}\bigg)^{n_s-1}=\prod_{s\in[d]}\bigg(\sum_{y\in X_s}y\bigg)^{n_s-1}.$$ 
\end{proof}

 %%%%%%%%%%%%%%%%%%%%%%%%%%%%%%%%%%%%%%%%%%%%%%
We will now count trees according to their \emph{indegree vectors} and \emph{complete degree vectors}. Recall that for a tuple $\ga=(\ga_{s,t,i})_{s,t\in [d],i\in[n_t]}$ of non-negative integers, $\mT_{\rho,\ga}$ denotes the set of trees in $\mT_\rho(\nn)$ such that $\ch_s(t,i)=\ga_{s,t,i}$ for all $s,t\in[d],i\in[n_t]$. %The following result is equivalent to~\eqref{eq:GF-multitype-Cayley}.
\begin{prop} \label{prop:nb-multitype-Cayley}
Let $\ga=(\ga_{s,t,i})_{s,t\in [d],i\in[n_t]}$ be a tuple of non-negative integers, and let $m_{s,t}=\sum_{i\in[n_t]}\ga_{s,t,i}$. The number of trees with indegree vector $\ga$ (hence having $m_{s,t}$ edges of type $(s,t)$) is 
\begin{equation}\label{eq:nb-multitype-Cayley}
|\mT_{\rho,\ga}|=\frac{ \prod_{t\in [d]}(n_t-1)!}{ \prod_{s,t\in[d],i\in[n_t]}\ga_{s,t,i}!}\times \sum_{A\in\Cayd} ~\prod_{(s,t)\in A}m_{s,t},
\end{equation}
if~\eqref{eq:compatible} holds, and $|\mT_{\rho,\ga}|=0$ otherwise.
%\eqref{eq:compatible} holds, and 0 otherwise.
\end{prop}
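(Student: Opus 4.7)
The plan is to extract the coefficient of $\prod_{s,t\in[d],i\in[n_t]} x_{s,t,i}^{\ga_{s,t,i}}$ from both sides of the identity~\eqref{eq:GF-multitype-Cayley} in Theorem~\ref{thm:multitype-Cayley}. The left-hand side gives $|\mT_{\rho,\ga}|$ directly. On the right-hand side, since the sum over $A\in\Cayd$ is outside the coefficient extraction, I would treat each Cayley tree $A$ separately and sum at the end.

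Fix $A\in\Cayd$. Every $s\neq\rho$ has a unique parent $\pi_A(s)\in[d]$ determined by the oriented edge $(s,\pi_A(s))\in A$, while $\rho$ has no parent in $A$. The $A$-summand in~\eqref{eq:GF-multitype-Cayley} thus factorizes as a product indexed by $s\in[d]$:
$$\Bigg(\sum_{t,i}x_{\rho,t,i}\Bigg)^{n_\rho-1}\prod_{s\neq\rho}\Bigg[\Bigg(\sum_{t,i}x_{s,t,i}\Bigg)^{n_s-1}\Bigg(\sum_{i\in[n_{\pi_A(s)}]}x_{s,\pi_A(s),i}\Bigg)\Bigg],$$
and the extraction of $\prod_{s,t,i}x_{s,t,i}^{\ga_{s,t,i}}$ splits accordingly into a product of multinomial extractions, one per $s$. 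For $s=\rho$ the extraction yields $\frac{(n_\rho-1)!}{\prod_{t,i}\ga_{\rho,t,i}!}$, provided $\sum_{t,i}\ga_{\rho,t,i}=n_\rho-1$. For $s\neq\rho$, splitting according to the choice $i_0\in[n_{\pi_A(s)}]$ of the variable drawn from the second factor gives
$$\sum_{i_0\in[n_{\pi_A(s)}]}\frac{(n_s-1)!}{(\ga_{s,\pi_A(s),i_0}-1)!\prod_{(t,i)\neq(\pi_A(s),i_0)}\ga_{s,t,i}!}=\frac{(n_s-1)!}{\prod_{t,i}\ga_{s,t,i}!}\,m_{s,\pi_A(s)},$$
valid provided $\sum_{t,i}\ga_{s,t,i}=n_s$.

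Multiplying these $s$-contributions together, noting that $\prod_{s\neq\rho}m_{s,\pi_A(s)}=\prod_{(s,t)\in A}m_{s,t}$, and summing over $A\in\Cayd$ yields formula~\eqref{eq:nb-multitype-Cayley}. The only mildly subtle point is the compatibility condition: the multinomial extractions succeed only when $\sum_{t,i}\ga_{s,t,i}=n_s-\de_{s,\rho}$ for every $s\in[d]$, which is exactly the condition~\eqref{eq:compatible} with $m_{s,t}=\sum_i\ga_{s,t,i}$; when it fails, $\mT_{\rho,\ga}$ is empty and both sides vanish. No real obstacle arises, as the argument is essentially a coefficient-extraction bookkeeping exercise; alternatively, one could obtain the same formula by combining Lemma~\ref{lem:tostars} with the coefficient extraction applied to the star-tree generating function of Lemma~\ref{lem:forstars}, which is structurally identical but shorter.
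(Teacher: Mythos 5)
Your proposal is correct and follows essentially the same route as the paper: extract the coefficient of $\prod_{s,t,i}x_{s,t,i}^{\ga_{s,t,i}}$ from~\eqref{eq:GF-multitype-Cayley}, factor the $A$-summand over $s\in[d]$ according to the parent of $s$ in $A$, and evaluate each factor as a multinomial coefficient times $m_{s,\pi_A(s)}$. The paper's proof is the same computation (with the factors denoted $C_{A,s}$), so there is nothing further to reconcile.
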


\begin{proof} %It is clear that $|\mT_{\rho,\ga}|=0$ unless $n_s=\delta_{s,\rho}+\sum_{t\in[d]}\sum_{i\in[n_t]}\ga_{s,t,i}$ (since both side of the equality give the number of vertices of type $s$) and we now assume that this condition holds.
%By Theorem~\ref{thm:multitype-Cayley}, $|\mT_{\rho,\ga}|$ is the coefficient of $\prod_{s,t \in [d],i\in[n_t]}x_{s,t,i}^{\ga_{s,t,i}}$ in the right-hand side of~\eqref{eq:GF-multitype-Cayley}. This gives 
By extracting the coefficient of $\prod_{s,t \in [d],i\in[n_t]}x_{s,t,i}^{\ga_{s,t,i}}$ in~\eqref{eq:GF-multitype-Cayley} we get
\begin{eqnarray}
|\mT_{\rho,\ga}|&=&\Bigg[\prod_{s,t \in [d],i\in[n_t]}\!\!\!x_{s,t,i}^{\ga_{s,t,i}}\Bigg]\sum_{A\in\Cayd}\Bigg(\prod_{(s,t)\in A}\,\sum_{i\in[n_{t}]}\!\!x_{s,t,i}\Bigg)\prod_{s\in[d]}\Bigg(\sum_{t\in[d],i\in[n_t]}\!\!\!x_{s,t,i}\Bigg)^{n_{s}-1}\nonumber \\
&=&\sum_{A\in\Cayd}\prod_{s\in [d]} C_{A,s} \label{eq:cardTga}
\end{eqnarray}
where 
$$
C_{A,s}=\Bigg[\prod_{t \in [d],i\in[n_t]}\!\!\!x_{s,t,i}^{\ga_{s,t,i}}\Bigg] \Bigg(\prod_{t,(s,t)\in A}\sum_{i\in[n_{t}]}x_{s,t,i}\Bigg)\Bigg(\sum_{t\in[d],i\in[n_t]}\!\!\!x_{s,t,i}\Bigg)^{n_{s}-1}.\\
$$
We now assume that~\eqref{eq:compatible} holds. Since there is no $t\in[d]$ such that $(\rho,t)\in A$, 
$$C_{A,\rho}\,=\,\Bigg[\prod_{t \in [d],i\in[n_t]}\!\!\!x_{\rho,t,i}^{\ga_{\rho,t,i}}\Bigg] \Bigg(\sum_{t\in[d],i\in[n_t]}\!\!\!x_{\rho,t,i}\Bigg)^{n_{\rho}-1}\,=\,\frac{(n_\rho-1)!}{\prod_{t \in [d],i\in[n_t]}\ga_{\rho,t,i}!}.$$
Similarly, by denoting $s'$ the parent of a vertex $s\neq \rho$ in $A$, we get
\begin{eqnarray*}
C_{A,s}=\Bigg[\prod_{t \in [d],i\in[n_t]}\!\!\!x_{s,t,i}^{\ga_{s,t,i}}\Bigg]\sum_{i\in[n_{s'}]} x_{s,s',i}\Bigg(\sum_{t\in[d],i\in[n_t]}\!\!\!x_{s,t,i}\Bigg)^{n_{s}-1}= \Bigg(\sum_{i\in[n_{s'}]}\!\!\!\ga_{s,s',i}\Bigg)\frac{(n_s-1)!}{\prod_{t \in [d],i\in[n_t]}\ga_{s,t,i}!}.
\end{eqnarray*}
Using $\ds \sum_{i\in[n_{s'}]}\ga_{s,s',i}=m_{s,s'}$, and plugging the expression of $C_{A,s}$ in~\eqref{eq:cardTga} gives~\eqref{eq:nb-multitype-Cayley}.
\end{proof}

Another way of stating~\eqref{eq:nb-multitype-Cayley} is by fixing the number of vertices of each indegree type (but without fixing their labels). We say that a vertex of a tree $T\in \mT_{\rho}(\nn)$ has \emph{indegree type} $\cc=(c_1,\ldots,c_d)$ if it has $c_s$ children of type $s$ for all $s\in[d]$. For instance, the vertex $(1,4)$ in Figure~\ref{fig:example-multitype-Cayley} has indegree type $\cc=(1,0,2)$.
Let $\CC=[n_1]\times\ldots \times[n_d]$, and let $\NN=(N_{t,\cc})_{t\in[d],\cc\in\CC}$ be a tuple of non-negative integers. Let $\mT_{\rho}^\bN$ be the set of trees in $\mT_\rho(\nn)$ having $N_{t,\cc}$ vertices of type~$t$ with indegree type~$\cc$, for all $t\in[d],\cc\in\CC$. The following result generalizes~\cite[Theorem 7]{MBM-Chapuy:vertical-profile-trees}. 
%% In order to have a non-empty set $\mT_{\rho}^\bN$, we must impose
%% $$ n_t=\sum_{\cc\in \CC}N_{t,\cc},\quad \textrm{ and }\quad n_s=\delta_{s,\rho}+\sum_{t\in[d],\cc\in\CC}c_sN_{t,\cc},$$
%% for all $t \in[d]$. 
%% %$\displaystyle n_t=\sum_{\cc\in \CC}N_{t,\cc}$ and $\displaystyle n_s=\delta_{s,\rho}+\sum_{t\in[d]}\,\sum_{\cc=(c_1,\ldots,c_d)\in\CC}c_sN_{t,\cc}$.
\begin{prop}\label{prop:number-trees-N}
Let $\NN=(N_{t,\cc})_{t\in[d],\cc\in\CC}$ be a tuple of non-negative integers. Let $\ds n_t=\sum_{\cc\in \CC}N_{t,\cc}$ for all $t\in[d]$, let $\ds m_{s,t}=\sum_{\cc\in\CC}c_sN_{t,\cc}$, and let $\ds N(k)=\sum_{s,t\in[d],\cc\in\CC, \atop \textrm{ such that } c_s=k}N_{t,\cc}$. 
The number of trees in $\mT_\rho(\nn)$ having $N_{t,\cc}$ vertices of type~$t$ with indegree type~$\cc$ is
\begin{equation*}%\label{eq:nb-multitype-Cayley-bis}
|\mT_{\rho}^\bN|= \frac{\ds \prod_{t\in[d]}n_t!(n_t-1)!}{\ds \prod_{t\in[d],\cc\in\CC}N_{t,\cc}! ~ \prod_{k\geq 0}k!^{N(k)}}\times \sum_{A\in \Cayd} ~\prod_{(s,t)\in A}m_{s,t}, 
\end{equation*}
if~\eqref{eq:compatible} holds and $|\mT_{\rho}^\bN|=0$ otherwise.
\end{prop}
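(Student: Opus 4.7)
The plan is to derive this proposition by aggregating Proposition~\ref{prop:nb-multitype-Cayley} across all labelled refinements of the data~$\NN$. Observe that $\mT_\rho^\NN$ decomposes as the disjoint union of the sets $\mT_{\rho,\ga}$, where $\ga$ ranges over all tuples of non-negative integers $\ga=(\ga_{s,t,i})$ such that for each type $t\in[d]$ and each indegree vector $\cc\in\CC$, exactly $N_{t,\cc}$ of the labels $i\in[n_t]$ satisfy $(\ga_{s,t,i})_{s\in[d]}=\cc$. Assuming~\eqref{eq:compatible} (else both sides are zero), for each such $\ga$ the set $\mT_{\rho,\ga}$ has the same cardinality, given by Proposition~\ref{prop:nb-multitype-Cayley}, provided we check that the two ingredients of that formula depend only on $\NN$ and not on the specific assignment.

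The first step is to count the number of such $\ga$. For each type $t$, the assignment of indegree vectors to the $n_t$ labels is a sequence of length $n_t$ over $\CC$ with prescribed multiplicities $N_{t,\cc}$, so there are $n_t!/\prod_{\cc}N_{t,\cc}!$ choices; multiplying over $t$ gives $\prod_{t\in[d]}n_t!/\prod_{t,\cc}N_{t,\cc}!$ admissible tuples $\ga$. The second step is to check that the quantities appearing in Proposition~\ref{prop:nb-multitype-Cayley} depend only on $\NN$. For the edge counts $m_{s,t}=\sum_{i\in[n_t]}\ga_{s,t,i}$, grouping labels by their assigned indegree vector yields $m_{s,t}=\sum_{\cc\in\CC}c_s N_{t,\cc}$, which matches the definition in the statement.

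The third step, which is the only mildly delicate point, is to rewrite the denominator $\prod_{s,t,i}\ga_{s,t,i}!$. Grouping the factors first by $(t,i)$ and then by the indegree vector $\cc$ assigned to label $(t,i)$, one obtains
\[
\prod_{s\in[d],t\in[d],i\in[n_t]}\ga_{s,t,i}!\;=\;\prod_{t\in[d]}\prod_{i\in[n_t]}\prod_{s\in[d]}c_s^{(t,i)}!\;=\;\prod_{t\in[d],\cc\in\CC}\Bigg(\prod_{s\in[d]}c_s!\Bigg)^{N_{t,\cc}}.
\]
Reorganizing this last product by the value $c_s=k$: a factor $k!$ appears once for each triple $(s,t,\cc)$ with $c_s=k$, and with multiplicity $N_{t,\cc}$, so its total exponent is $\sum_{s,t,\cc\,:\,c_s=k}N_{t,\cc}=N(k)$. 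Thus $\prod_{s,t,i}\ga_{s,t,i}!=\prod_{k\geq 0}k!^{N(k)}$, which depends only on~$\NN$.

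Combining these pieces, $|\mT_\rho^\NN|$ equals the number of admissible $\ga$'s times the common cardinality $|\mT_{\rho,\ga}|$ from Proposition~\ref{prop:nb-multitype-Cayley}:
\[
|\mT_\rho^\NN|\;=\;\frac{\prod_{t\in[d]}n_t!}{\prod_{t,\cc}N_{t,\cc}!}\cdot\frac{\prod_{t\in[d]}(n_t-1)!}{\prod_{k\geq 0}k!^{N(k)}}\cdot\sum_{A\in\Cayd}\prod_{(s,t)\in A}m_{s,t},
\]
which after collecting the factorials yields the claimed formula. I do not foresee a genuine obstacle here; the only nontrivial bookkeeping is the reindexing of $\prod_{s,t,i}\ga_{s,t,i}!$ into the form $\prod_k k!^{N(k)}$, and this is a straightforward rearrangement.
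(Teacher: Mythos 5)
Your proposal is correct and follows essentially the same route as the paper: decompose $\mT_\rho^\NN$ as the disjoint union of the sets $\mT_{\rho,\ga}$ over the $\prod_t n_t!/\prod_{t,\cc}N_{t,\cc}!$ admissible tuples $\ga$, and apply Proposition~\ref{prop:nb-multitype-Cayley} to each. Your explicit verification that $\prod_{s,t,i}\ga_{s,t,i}!=\prod_{k\geq 0}k!^{N(k)}$ is a detail the paper leaves implicit, and it is carried out correctly.
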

\begin{proof}
By definition, $\mT_{\rho}^\bN=\bigcup_{\ga\in\Ga(\NN)}\mT_{\rho,\ga}$, where $\Ga(\NN)$ is the set of tuples on non-negative integers $(\ga_{s,t,i})_{s,t\in [d],i\in[n_t]}$ such that for all $t\in[d]$ and all $\cc\in \CC$, there are $N_{t,\cc}$ integers $i\in[n_t]$ with $(\ga_{1,t,i},\ldots,\ga_{d,t,i})=\cc$. 
Since $\ds |\Ga(\NN)|=\frac{\prod_{t\in[d]} n_t!}{\prod_{t\in[d],\cc\in\CC}N_{t,\cc}!}$, using~\eqref{eq:nb-multitype-Cayley} gives the result.
\end{proof}
%% %In this case,~\eqref{eq:nb-multitype-Cayley} gives
%% \begin{equation}\label{eq:nb-multitype-Cayley-bis}
%% |\mT_{\rho}^\bN|= \frac{\displaystyle \prod_{t\in[d]}n_t!(n_t-1)!}{\displaystyle \prod_{t\in[d],\cc\in\CC}N_{t,\cc}! ~ \prod_{k\geq 0}k!^{N(k)}}\times \sum_{A\in \Cayd} ~\prod_{(s,t)\in A}m_{s,t}, 
%% \end{equation}
%% where 
%% $$m_{s,t}=\sum_{\cc\in\CC}c_sN_{t,\cc}, \quad\textrm{ and }\quad N(k)=\sum_{s,t\in[d]}\sum_{\cc\in\CC, \atop \textrm{ such that } c_s=k}N_{t,\cc}.$$
\noindent \textbf{Remark.} Propositions~\ref{prop:trees-by-number-edges} to~\ref{prop:number-trees-N} give particularly nice counting formulas (which factorize completely) when there is a unique tree $A\in \Cayd$ contributing to the sum (the cases considered in~\cite{MBM-Chapuy:vertical-profile-trees} are all of this form). It is therefore interesting to understand when this favorable situation occurs. A quick investigation reveals that it occurs when the type of the edges allowed to appear in the trees to be enumerated belong to a set $D\subseteq[d]^2$ which can be partitioned as $D=A\cup A'$, where $A$ is  the edge set of any tree in $\Cayd$ and $A'$ is a set of edges of the form $(s,t)$ where $t=s$ or $t$ is a descendant of $s$ in $A$ (i.e., $(t,s)$ is in $A$). For instance, if $A\in \Cayd$, and $m_{s,t}=0$ unless $(s,t)$ or $(t,s)$ is in $A$, then Proposition~\ref{prop:number-trees-N} gives
$$|\mT_{\rho}^\bN|= \frac{\prod_{t\in[d]}n_t!(n_t-1)!}{\prod_{t\in[d],\cc\in\CC}N_{t,\cc}! ~ \prod_{k\geq 0}k!^{N(k)}}\prod_{(s,t)\in A}m_{s,t}. $$
%is of the form $A\cup B$ with $B$ a set of edges of the form $(s,t)$ where $t=s$ or $t$ is a descendant of $s$ in the tree $A$.\\

%%%%%%%%%%%%%%%%%%%%%%%%%%%%
As this section's last application of Theorem~\ref{thm:multitype-Cayley}, we count trees according to their complete degree type. 
We say that a vertex of a tree in $\mT_\rho(\nn)$ has \emph{complete degree type} $(t,u,\cc)$ if it has type~$t$, indegree type~$\cc$ and its parent has type~$u$, with the convention that the fictitious ``parent'' of the root vertex has type $d+1$. For instance, the vertex $(1,4)$ in Figure~\ref{fig:example-multitype-Cayley} has complete degree type $(1,2,(1,0,2))$, while the root vertex has complete degree type $(2,4,(1,0,1))$. Let $\vect{\mT}_{\rho}^\NN$ be the set of multitype Cayley trees with a root vertex of type $\rho$ having $N_{t,u,\cc}$ vertices of complete degree type $(t,u,\cc)$ for all $t\in [d],u\in[d+1],\cc\in\CC$. 
%% Observe that the set $\vect{\mT}_{\rho}^\NN$ is empty unless 
%% $$ n_t=\sum_{u\in [d+1]}m_{t,u},~\quad m_{s,d+1}=\de_{s,\rho},~\quad \textrm{ and } \quad\quad~ m_{s,t}=\sum_{u\in[d+1],\cc\in\CC}c_sN_{t,u,\cc},$$
%% for all $s,t\in [d]$, and we now suppose these conditions hold. 
%% Observe that the set $\vect{\mT}_{\rho}^\NN$ is empty unless for all $s,t\in [d]$,
%% \begin{equation}\label{eq:compatible2} 
%% n_t=\sum_{u\in [d+1]}m_{t,u},~\quad m_{s,d+1}=\de_{s,\rho},~\quad \textrm{ and } \quad\quad~ m_{s,t}=\sum_{u\in[d+1],\cc\in\CC}c_sN_{t,u,\cc}.
%% \end{equation}
\begin{prop}\label{prop:complete-type}
Let $\NN=(N_{t,u,\cc})_{t\in[d], u\in[d+1],\cc\in\CC}$ be a tuple of non-negative integers. 
Let $\ds ~m_{t,u}=\sum_{\cc\in\CC}N_{t,u,\cc}$,
let $\ds ~m_{s,t,u}=\!\sum_{\cc\in\CC}\!c_sN_{t,u,\cc}$, let $\ds n_t=\sum_{u\in [d+1]}m_{t,u}$, and let
$$\ds ~N(k)=\!\!\sum_{s,t\in[d],u\in[d+1],\cc\in\CC,\atop \textrm{ such that } c_s=k}\!\!N_{t,u,\cc}.$$
The number of trees in $\mT_\rho(\nn)$ having $N_{t,u,\cc}$ vertices of complete degree type~$(t,u,\cc)$ is
\begin{eqnarray}\label{eq:nb-complete-multitype}
|\vect{\mT}_{\rho}^\NN|= \frac{\displaystyle \prod_{t\in[d]}n_t!\, \prod_{s,t\in[d],\, m_{s,t}>0}(m_{s,t}-1)!}{\displaystyle \prod_{t,u\in[d],\cc\in\CC}\!\!\!\!\!\!N_{t,u,\cc}! ~ \prod_{k\geq 0} k!^{N(k)}}\times \sum_{A\subseteq G(\NN)} ~\prod_{\big((s,t),(t,u)\big)\in A}m_{s,t,u}\, , 
\end{eqnarray}
provided that $m_{s,d+1}=\de_{s,\rho}$, and $m_{s,t}=\sum_{u\in[d+1]}m_{s,t,u}$ for all $s,t\in[d]$ ($|\vect{\mT}_{\rho}^\NN|=0$ otherwise),
where $G(\NN)$ is the graph with vertex set $V=\{(t,u)\in[d]\times[d+1],m_{t,u}>0\}$ and edge set $\{\big((s,t),(t,u)\big) \mid (s,t),(t,u)\in V\}$, and the sum is taken over the spanning trees $A$ of $G(\NN)$ oriented toward the vertex $(\rho,d+1)$.
\end{prop}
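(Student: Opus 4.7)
The plan is to reduce Proposition~\ref{prop:complete-type} to Proposition~\ref{prop:number-trees-N} by grouping the vertices according to the type of their parent. For a tree $T\in\vect{\mT}_\rho^\NN$, each vertex $v$ has a well-defined \emph{refined type} $(t,u)\in V$, where $t$ is the type of $v$ and $u$ is the type of its parent (with the convention $u=d+1$ if $v$ is the root). I would associate to $T$ a rooted multitype Cayley tree $T'$ with type set $V$, profile $\mm=(m_{t,u})_{(t,u)\in V}$, and root of type $(\rho,d+1)$, obtained by keeping the underlying tree structure and relabeling, for each $(t,u)\in V$, the $m_{t,u}$ vertices of refined type $(t,u)$ by $1,\dots,m_{t,u}$ in increasing order of their original label. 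The set $\vect{\mT}_\rho^\NN$ is then in bijection with the set of pairs $(T',P)$ where $T'$ is a refined tree realizing the data $\NN$ and $P=(S_{t,u})_{t\in[d],\,u\in[d+1]}$ is a collection of ordered partitions, one for each $t\in[d]$, of $[n_t]$ into blocks of sizes $(m_{t,1},\dots,m_{t,d+1})$ (specifying which original labels are assigned to each refined type). Hence
\[
|\vect{\mT}_\rho^\NN|=\Bigg(\prod_{t\in[d]}\frac{n_t!}{\prod_{u\in[d+1]}m_{t,u}!}\Bigg)\,|\mT'|,
\]
where $\mT'$ denotes the set of refined trees realizing the data $\NN$.

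The next step is to compute $|\mT'|$ via Proposition~\ref{prop:number-trees-N} applied to the refined setting. The key observation is that in $T'$ a vertex of refined type $(t,u)$ can have children only of refined type $(s,t)$ for some $s\in[d]$ (since the second coordinate of a child's refined type must equal the first coordinate of its parent's refined type), and if that vertex has indegree vector $\cc=(c_1,\ldots,c_d)$ in $T$, then in $T'$ it has exactly $c_s$ children of refined type $(s,t)$ for each $s$. Hence the counts $N_{t,u,\cc}$ prescribe precisely the refined indegree data in $T'$, and the number of edges of $T'$ from refined type $(s,t)$ to refined type $(t,u)$ equals $m_{s,t,u}$. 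Proposition~\ref{prop:number-trees-N} then gives
\[
|\mT'|=\frac{\prod_{(t,u)\in V}m_{t,u}!\,(m_{t,u}-1)!}{\prod_{t,u,\cc}N_{t,u,\cc}!\,\prod_{k\geq 0}k!^{N(k)}}\,\sum_{A}\prod_{((s,t),(t',u'))\in A}\bigl[t=t'\bigr]\,m_{s,t,u'},
\]
where $A$ ranges over rooted Cayley trees on $V$ with root $(\rho,d+1)$ and $[\,\cdot\,]$ denotes the Iverson bracket. The indicator $[t=t']$ arises because the relevant refined edge count in $T'$ vanishes otherwise, and it forces every edge of $A$ to have the form $((s,t),(t,u))$, restricting the sum to spanning trees of $G(\NN)$ as in~\eqref{eq:nb-complete-multitype}. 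A direct check shows that the refined analogue of $N(k)$ from Proposition~\ref{prop:number-trees-N} equals the $N(k)$ of~\eqref{eq:nb-complete-multitype}: only coordinates $(s,t)$ of the refined indegree vector of a refined-type-$(t',u')$ vertex with $t=t'$ are non-zero, and $0!=1$ makes the $k=0$ contributions irrelevant.

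Combining the two displays and simplifying gives~\eqref{eq:nb-complete-multitype}: the factors $\prod_{(t,u)\in V}m_{t,u}!$ in the numerator of $|\mT'|$ cancel the factors $\prod_{t,u}m_{t,u}!$ in the denominator of the relabeling count, leaving $\prod_{t}n_t!$; and because $(\rho,d+1)$ is the unique element of $V$ with second coordinate $d+1$ and $m_{\rho,d+1}=1$, one has $\prod_{(t,u)\in V}(m_{t,u}-1)!=\prod_{s,t\in[d],\,m_{s,t}>0}(m_{s,t}-1)!$. The hypotheses $m_{s,d+1}=\de_{s,\rho}$ and $m_{s,t}=\sum_{u}m_{s,t,u}$ in the statement are exactly what is needed to make the refined data satisfy the compatibility condition~\eqref{eq:compatible} for Proposition~\ref{prop:number-trees-N}. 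The main obstacle I expect is the careful bookkeeping required to check that the sum over Cayley trees on $V$ collapses to a sum over spanning trees of $G(\NN)$ oriented toward $(\rho,d+1)$, and that the various factorial factors arising from the relabeling count and from the refined application of Proposition~\ref{prop:number-trees-N} combine correctly into the stated formula.
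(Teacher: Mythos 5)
Your proof is correct and follows essentially the same route as the paper: the paper likewise reduces the statement to Proposition~\ref{prop:number-trees-N} by passing to the ``complete type'' $(t,u)$ of each vertex and accounting for the fact that vertices remain labeled within their original type by the factor $\prod_{t\in[d]}n_t!/\prod_{s,t}m_{s,t}!$. You merely spell out in more detail the relabeling bijection and the collapse of the sum over Cayley trees on $V$ to spanning trees of $G(\NN)$, both of which the paper leaves implicit.
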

As an illustration of Proposition~\ref{prop:complete-type}, consider the case $d=1$, $n_1=n$ and $m_{1,1}=n-1>0$. 
In this case, the graph $G(\NN)$ has two vertices $(1,1)$ and $(1,2)$ and two edges: one edge going from $(1,1)$ to $(1,2)$ and one edge going from $(1,1)$ to $(1,1)$. Hence the sum in~\eqref{eq:nb-complete-multitype} reduces to $m_{1,1,2}$ which is the specified degree of the root vertex. 
Equation \eqref{eq:nb-complete-multitype} then gives the number $|\vect{\mT}_{\rho}^\NN|$ of Cayley trees with $n$ vertices, with $N_c$ non-root vertices of indegree $c$ and a root vertex of degree $\ell$ as
$$|\vect{\mT}_{\rho}^\NN|=\frac{n!(n-2)!}{\prod_{c\geq 0}N_c!c!^{N_c}(\ell-1)!}.$$
\begin{proof}
The proof of Proposition \ref{prop:complete-type} is based on the observation that counting trees according to complete degree types can be seen as a special case of counting trees according to indegree types. Indeed, let us define the \emph{complete type} of a vertex $v$ of a tree $T\in \mT_{\rho}(\nn)$ as the pair $(t,u)\in[d]\times[d+1]$, where $t$ is the type of $v$ and $u$ is the type of the parent of $v$, with the convention that $u=d+1$ if $v$ is the root vertex. We can then directly apply Proposition~\ref{prop:number-trees-N} in order to count trees according to their complete degree type (upon observing that the number of vertices of complete type $(s,t)$ is $m_{s,t}$, and the number of edges from a vertex of complete type $(s,t)$ to a vertex of complete type $(t,u)$ is $m_{s,t,u}$). 
We only need to remember that the vertices of our Cayley trees are still labeled within their ``original type'' and not within their complete type (that is, the labels of vertices of complete type $(s,t)$ have a label in $[n_t]$ and not in $[m_{s,t}]$), which is accounted for by a factor $\ds \frac{\prod_{t\in[d]}n_t!}{\prod_{s,t\in[d]}m_{s,t}!}$ in~\eqref{eq:nb-complete-multitype}. 
\end{proof}

Observe that using the same techniques as for Proposition~\ref{prop:complete-type} one could enumerate multitype Cayley trees according to the number of vertices having given indegree type, given type, with parent of given type, and grandparent of given type, etc. 
We now investigate a case where the sum in~\eqref{eq:nb-complete-multitype} simplifies greatly. The following result answers a question raised in~\cite[Section 8.2]{MBM-Chapuy:vertical-profile-trees}.
\begin{cor}\label{cor:complete-type-special}
Suppose, using the notation of Proposition~\ref{prop:complete-type}, that $\rho=d$ and all the pairs $(s,t)\in [d]\times [d+1]$ such that $m_{s,t}>0$ satisfy $t\leq s+1$. In this case,
%\begin{equation}\label{eq:nb-complete-multitype-specialcase}
\begin{equation}\label{eq:nb-complete-multitype-specialcase}
\begin{split}
|\vect{\mT}_{d}^\NN|&=\frac{\displaystyle \prod_{t\in[d]}n_t! \prod_{s,t\in[d],\, m_{s,t}>0}(m_{s,t}-1)!}{\displaystyle \prod_{t,u\in[d],\cc\in\CC}\!\!\!\!\!\!N_{t,u,\cc}! ~ \prod_{k\geq 0} k!^{N(k)}} \times \prod_{s,t\in[d], \textrm{ such that }\atop t<s \textrm{ and } m_{s,t}>0 }\!\!\! m_{s,t} \\
&\quad\quad\quad\quad\quad\quad\quad\quad\quad\quad\quad\times \mu_1\prod_{s=2}^d \left(m_{s-1,s,s+1}\,\mu_{s}+m_{s-1,s,s}m_{s,s,s+1}\right),
%\left(\mu_1\prod_{s\in[d-1]} m_{s,s+1,s+2}\,\mu_{s+1}+m_{s,s+1,s+1}m_{s+1,s+1,s+2}.\right) 
\end{split}
\end{equation}
where $\mu_s=m_{s,s}-m_{s,s,s}$ if $m_{s,s}>0$ and $\mu_s=1$ if $m_{s,s}=0$. 
\end{cor}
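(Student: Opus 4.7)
The plan is to begin with Proposition~\ref{prop:complete-type} and evaluate the spanning-tree sum
$$S:=\sum_{A\subseteq G(\NN)}\prod_{((s,t),(t,u))\in A}m_{s,t,u}$$
under the hypotheses $\rho=d$ and $m_{s,t}>0\Rightarrow t\le s+1$. Under these hypotheses, the vertices of $G(\NN)$ split into three classes according to how the first and second coordinates compare: the \emph{up}-vertices $U_s:=(s-1,s)$ for $s\in\{2,\dots,d+1\}$ (with $U_{d+1}$ the root), the \emph{horizontal}-vertices $H_s:=(s,s)$ for $s\in[d]$, and the \emph{down}-vertices $D_{s,t}:=(s,t)$ for $1\le t<s\le d$. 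An edge $((s,t),(t,u))$ sends first coordinate $s$ to first coordinate $t$, so outgoing edges from $U$-, $H$-, $D$-vertices respectively raise the first coordinate by one, preserve it, or decrease it.

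The core step is a parametrization of spanning trees of $G(\NN)$ oriented toward $U_{d+1}$: for each $s\in\{2,\dots,d\}$, either take $U_s\to U_{s+1}$ together with any non-self-loop outgoing edge from $H_s$ (if $H_s\in V$), or take $U_s\to H_s$ together with the forced edge $H_s\to U_{s+1}$; if $H_1\in V$, take the forced edge $H_1\to U_2$; and at each down-vertex $D_{s,t}$, take an arbitrary outgoing edge to some level-$t$ vertex. Necessity: the path from $U_s$ to the root must cross every level increment $j\to j+1$ for $j\in\{s-1,\dots,d-1\}$, and since only $U_{j+1}$ provides such a crossing, this path must pass through $U_{s+1},U_{s+2},\dots,U_{d+1}$. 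Hence $U_s\to D_{s,t}$ is impossible (the descent would need to re-ascend through $U_s$, creating a cycle), and likewise $U_s\to H_s$ forces $H_s\to U_{s+1}$. Sufficiency is verified by noting that the $U$-vertices form a monotonically ascending backbone, and the paths from $D$-vertices (and from $H_s$ in the free case) have non-increasing first coordinate until they join this backbone; no cycle can occur because the level along a candidate cycle would need to descend from its initial value and then ascend back, necessarily revisiting a $U$-vertex whose outgoing edge is fixed toward the root.

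Given the parametrization, the weighted sum factors over vertices. Each $D_{s,t}$ contributes $\sum_u m_{s,t,u}=m_{s,t}$, yielding the factor $\prod_{t<s,\,m_{s,t}>0}m_{s,t}$. The vertex $H_1$ contributes $m_{1,1,2}=\mu_1$ when present, matching the convention $\mu_1=1$ when $m_{1,1}=0$. For each $s\in\{2,\dots,d\}$, the two options at $(U_s,H_s)$ contribute $m_{s-1,s,s+1}\cdot\sum_{u\ne s}m_{s,s,u}=m_{s-1,s,s+1}\mu_s$ and $m_{s-1,s,s}\cdot m_{s,s,s+1}$ respectively, summing to $m_{s-1,s,s+1}\mu_s+m_{s-1,s,s}m_{s,s,s+1}$. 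Substituting the resulting expression for $S$ into Proposition~\ref{prop:complete-type} gives~\eqref{eq:nb-complete-multitype-specialcase}.

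The main obstacle is the cycle analysis underlying the parametrization. In particular, one must carefully rule out cycles of the form $H_s\to D_{s,t}\to\cdots\to D_{s',s}\to H_s$ (with $s'>s$), which could in principle arise from combining the free choice $H_s\to D_{s,t}$ in the first case with the free choices at down-vertices; the key observation is that once the path leaves $H_s$ downward, its first coordinate is non-increasing until it joins the $U$-backbone, which then monotonically ascends to the root without ever visiting a down-vertex at level $s'>s$.
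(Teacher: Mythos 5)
Your proposal is correct and follows essentially the same route as the paper: you characterize the spanning trees of $G(\NN)$ oriented toward $(d,d+1)$ by the binary choice at each level $s$ (either $e_{s-1,s,s+1}$, or both $e_{s-1,s,s}$ and $e_{s,s,s+1}$), prove this via a monotonicity argument on the first coordinate (the paper uses the lexicographic order on $V'$ for the same purpose in its Claim on the characterization of the trees $A$), and then factor the weighted sum vertex by vertex. Your write-up is somewhat more explicit about the factorization and the cycle analysis, which the paper largely leaves to the reader, but the underlying argument is the same.
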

%In the case $d=1$ the rightmost product in~\eqref{eq:nb-complete-multitype-specialcase} should be interpreted as having value~1. 
The result given in~\cite[Theorem 8]{MBM-Chapuy:vertical-profile-trees} corresponds to~\eqref{eq:nb-complete-multitype-specialcase} in the case where $m_{s,s}=0$ for all $s$ (in this case, $\mu_{s}=1$ and $m_{s-1,s,s}=0$). We now briefly explain how to derive Corollary~\ref{cor:complete-type-special} from Proposition~\ref{prop:complete-type}. Let $G'$ be the directed graph with vertex set $V'=\{(s,t)\in[d]\times[d+1]\mid t\leq s+1\}$ and edge set $E'=\{((s,t),(t,u)) \mid s,t\in[d],u\in[d+1]\}$. We denote $e_{s,t,u}$ the edge $((s,t),(t,u))\in E'$. By hypothesis, the graph $G(\NN)$ is a subgraph of $G'$. %Let $A$ be a subset of $E'$, such that $A$ contains exactly one edge going out of each vertex $(s,t)\in[d]^2$ (that is, an edge of the form $e_{s,t,u}$), but contains no loop (that is, an edge of the form $e_{s,s,s}$). We claim that $A$ is
\begin{claim} \label{claim:characterization-of-Atrees}
Let $A$ be a subset of $E'$, such that $A$ contains exactly one edge going out of each vertex $(s,t)\in[d]^2$ (that is, an edge of the form $e_{s,t,u}$), but contains no loop (that is, no edge of the form $e_{s,s,s}$).
 Then $A$ is a spanning tree of $G'$ oriented toward the root vertex $(d,d+1)$ if and only if for all $s\in\{2,\ldots,d\}$, $A$ contains either the edge $e_{s-1,s,s+1}$ or both the edges $e_{s-1,s,s}$ and $e_{s,s,s+1}$. 
\end{claim}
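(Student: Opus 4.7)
The plan is to prove both directions by exploiting a structural observation: an edge $e_{s,t,u}$ of $G'$ sends a vertex of first coordinate $s$ to a vertex of first coordinate $t$, and the constraint $(a,b)\in V'$ (namely $b\leq a+1$) forces the first coordinate along any directed path in $A$ to increase by at most one per step. Consequently, increasing the first coordinate from $k$ to $k+1$ requires traversing an edge of the form $e_{*,k,k+1}$ and landing at the \emph{key vertex} $(k,k+1)$.

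For necessity, assume $A$ is a spanning tree oriented toward $(d,d+1)$. Fix $s\in\{2,\ldots,d\}$ and let $u\leq s+1$ be the unique index with $e_{s-1,s,u}\in A$. If $u\leq s-1$, then the path starting at $(s-1,s)$ goes $(s-1,s)\to(s,u)\to(u,\cdot)\to\cdots$, dropping the first coordinate to $u<s$; to reach the root $(d,d+1)$ of first coordinate $d\geq s$, the path must climb the first coordinate back to $s$, which by the observation forces it to pass through the key vertex $(s-1,s)$ again, producing a cycle, a contradiction. Hence $u\in\{s,s+1\}$. In the case $u=s$, the successor is $(s,s)$; writing $e_{s,s,v}$ for its unique outgoing edge, the no-loop hypothesis yields $v\neq s$, and the same cycle argument applied from $(s,s)$ rules out $v\leq s-1$. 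Thus $v=s+1$, and the second clause of the condition holds.

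For sufficiency, assume the condition holds; since every non-root vertex of $V'$ has out-degree one, it suffices to rule out any directed cycle in $A$. Suppose such a cycle $C$ existed and let $a$ be the largest first coordinate attained on $C$. The vertex $(1,1)$ has $e_{1,1,2}$ as its only outgoing edge (targets $(1,u)\in V'$ require $u\leq 2$, and $u=1$ is excluded by the no-loop hypothesis), so its successor $(1,2)$ sends to a vertex of first coordinate $2$; hence $a\geq 2$. Fix $(a,t)\in C$: its predecessor on $C$ has second coordinate $a$, first coordinate $\leq a$ by maximality and $\geq a-1$ by the $V'$ constraint, hence is $(a-1,a)$ or $(a,a)$. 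Apply the condition at $s=a$. If the predecessor is $(a-1,a)$, then subcase~A ($e_{a-1,a,a+1}\in A$) puts $(a,a+1)\in C$ with a successor of first coordinate $a+1$, while subcase~B ($e_{a-1,a,a},e_{a,a,a+1}\in A$) yields the on-$C$ chain $(a-1,a)\to(a,a)\to(a,a+1)$ with the same consequence; either contradicts the maximality of $a$. If the predecessor is $(a,a)$, then the predecessor of $(a,a)$ on $C$ cannot be $(a,a)$ itself (no loops), so it must be $(a-1,a)$ with $e_{a-1,a,a}\in A$, reducing to subcase~B. Hence $C$ cannot exist.

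The main obstacle will be orchestrating the case analysis cleanly: one has to track two coordinates simultaneously and eliminate all combinations of predecessor type and condition subcase using the monotonicity observation, the no-loop hypothesis, and the disjunctive structure of the condition.
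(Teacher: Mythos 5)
Your proof is correct, and it rests on the same structural insight as the paper's: along a directed path in $G'$ the relevant ordering can only climb past level $s$ by passing through the bottleneck vertices $(s-1,s)$ and $(s,s+1)$. The execution, however, is genuinely different. The paper orders $V'$ lexicographically and argues via cuts (for necessity, identifying the edges of $E'$ that cross the down-sets below $(s-1,s)$ and below $(s,s)$) and via a direct descent (for sufficiency, the path from any $(s,t)$ with $t\leq s$ decreases strictly in lex order until it reaches some $(k,k+1)$, then climbs to the root). You track only the first coordinate: necessity comes from showing that an outgoing edge $e_{s-1,s,u}$ with $u\leq s-1$ (or $e_{s,s,v}$ with $v\leq s-1$) forces the path to revisit $(s-1,s)$, and sufficiency from taking the maximal first coordinate $a$ on a hypothetical cycle and contradicting maximality using the hypothesis at $s=a$. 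Your necessity argument is in fact tighter than the paper's at one point: the paper asserts that $e_{s-1,s,s+1}$ and $e_{s-1,s,s}$ are the \emph{only} edges crossing the cut below $(s-1,s)$, but $e_{s-1,s,v}$ with $v<s$ also crosses it (its target $(s,v)$ is lexicographically above $(s-1,s)$); it is precisely your revisit argument that rules those edges out. Two cosmetic slips, neither a gap: the edge that raises the first coordinate from $k$ to $k+1$ has the form $e_{k,k+1,*}$, not $e_{*,k,k+1}$ (what you actually use --- that the path must pass through the vertex $(k,k+1)$ --- is correct); and in the sufficiency step, when $a=d$ the vertex $(a,a+1)=(d,d+1)$ is the root and has no successor, but the root cannot lie on a cycle at all, so the contradiction stands.
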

\begin{proof}
We consider the \emph{lexicographic order} on the set $V'$ of vertices (in this order $(s,t)\leq_{\lex}(s',t')$ if and only if $s< s'$ or $s=s'$ and $t\leq t'$). Observe that the root vertex $(d,d+1)$ is the largest vertex in the lexicographic order and that $A$ is a spanning tree if and only if the unique directed path in $A$ starting from any vertex $(s,t)\in[d]^2$ leads to the root vertex $(d,d+1)$. 
Suppose first that $A$ is a spanning tree of $G'$ oriented toward the root vertex $(d,d+1)$. Since $e_{s-1,s,s+1}$ and $e_{s-1,s,s}$ are the only edges in $E'$ going from a vertex $(t,u)\leq_{\lex}(s-1,s)$ to a vertex $(u,v)>_{\lex}(s-1,s)$, we know that one of these edges is in $A$. Similarly, since $e_{s-1,s,s+1}$ and $e_{s,s,s+1}$ are the only edges in $E'$ going from a vertex $(t,u)\leq_{\lex}(s,s)$ to a vertex $(u,v)>_{\lex}(s,s)$, we know that one of these edges is in $A$.
Suppose conversely that for all $s\in\{2,\ldots,d\}$, $A$ contains either the edge $e_{s-1,s,s+1}$ or both the edges $e_{s-1,s,s}$ and $e_{s,s,s+1}$. We want to prove that for each $(s,t)\in [d]^2$ the directed path in $A$ starting from $(s,t)$ leads to the root vertex $(d,d+1)$. This property is obvious in the case $t=s+1$. Moreover in the case $t\leq s$ we observe that the vertices decrease strictly for the lexicographic order along the directed path starting at $(s,t)$ until a vertex of the form $(s,s+1)$ is reached. This concludes the proof.
\end{proof}
\noindent We now complete the proof of~Corollary~\ref{cor:complete-type-special}. Using Claim~\ref{claim:characterization-of-Atrees}, one can decompose the sum in~\eqref{eq:nb-complete-multitype} according to the fact that the spanning tree $A$ of $G(\NN)$ contains the edge $e_{s-1,s,s+1}$ or both the edges $e_{s-1,s,s}$ and $e_{s,s,s+1}$. We now claim that in the rightmost product of~\eqref{eq:nb-complete-multitype-specialcase} the case $e_{s-1,s,s+1}\in A$ is accounted for by the term $m_{s-1,s,s+1}\,\mu_{s}$, while the case $\{e_{s-1,s,s},e_{s,s,s+1}\}\subseteq A$ is accounted for by the term $m_{s-1,s,s}m_{s,s,s+1}$. We leave the details of this statement to the reader, and only mention that the term $\mu_s$ accounts for the choice of the edge of $A$ going out of the vertex $(s,s)$ (in the case $(s,s)\in V$), while the term $m_{s,t}$ in the first product accounts for the choice of the edge of $A$ going out of the vertex $(s,t)$ (in the case $(s,t)\in V$ with $t<s$).\\

\section{Multivariate Lagrange Inversion formula}\label{sec:Lagrange}
In this section we show that Theorem~\ref{thm:multitype-Cayley}, via Proposition~\ref{prop:nb-multitype-Cayley}, implies the multivariate Lagrange inversion formula~\cite{Good:multivariate-Lagrange}. 
There are several versions of this formula which have been shown to be equivalent to each other; see~\cite{Gessel:multivariate-Lagrange} for a survey. Here we will derive a version due to Bender and Richmond~\cite{Bender-Richmond:multivariate-Lagrange}.

We will consider power series in the variables $x_1,\ldots,x_d$, and we denote $\xx=(x_1,\ldots,x_d)$. For a tuple of integers $\nn=(n_1,\ldots,n_d)$, we denote $\xx^\nn=x_1^{n_1}x_2^{n_2}\cdots x_d^{n_d}$, and we denote $[\xx^\nn]f(\xx)$ for the coefficient of the monomial $\xx^\nn$ in a power series $f(\xx)$. 

\begin{thm}[multivariate Lagrange inversion formula~\cite{Bender-Richmond:multivariate-Lagrange}]\label{thm:multivariate-Lagrange}
Let $\GG_1,\ldots,\GG_{d+1},$ be power series in $d$ variables with non-zero constant terms. There exists a unique tuple $(f_1,\ldots,f_d)$ of power series in $x_1,\ldots,x_d$ satisfying
\begin{eqnarray}\label{eq:Lagrange-setting}
f_t(x_1,\ldots,x_d)=x_t\GG_t(f_1,\ldots,f_d)
\end{eqnarray}
for all $t\in[d]$. Moreover, for any tuple $\nn=(n_1,\ldots,n_d)$ of positive integers
\begin{eqnarray}\nonumber
[\xx^\nn]\GG_{d+1}(f_1,\ldots,f_d)&\!=\!&[\xx^\nn] \Bigg(\prod_{t\in[d]} \frac{x_t}{n_t}\Bigg)\Bigg(\sum_{A\in\Caydp}\,\prod_{t\in[d+1]}\!\!\Bigg(\prod_{(s,t)\in A}\frac{\partial}{\partial x_s }\Bigg)\GG_t(\xx)\Bigg),
\end{eqnarray}
where $\Caydp$ is the set of unitype Cayley trees with vertex set $[d+1]$ and root vertex $d+1$, considered as oriented toward their root vertex.
\end{thm}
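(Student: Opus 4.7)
My plan is to interpret the left-hand side combinatorially as a weighted sum over multitype Cayley trees, apply Proposition~\ref{prop:nb-multitype-Cayley}, and then recognise the result as the claimed coefficient extraction involving the derivative operators.

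First, I would unfold $f_t = x_t \GG_t(\ff)$ iteratively, so that $f_t$ becomes a generating function for rooted tree structures where each vertex of type $s$ carries an indegree tuple $\cc$ weighted by $\GG_{s,\cc}$ and contributes a factor $x_s$. Applying $\GG_{d+1}$ adjoins a super-root of type $d+1$, yielding a tree of profile $\nn'=(n_1,\ldots,n_d,1)$. The subtlety is that $\prod_s f_s^{c_s}$ treats the $c_s$ children of each type as an \emph{ordered} tuple, while Cayley trees have unordered siblings; translating between the two and normalising by $\prod_{s\in[d]} n_s!$ (artificial labelling within each type) gives
\[
[\xx^\nn]\GG_{d+1}(\ff)=\frac{1}{\prod_{s\in[d]} n_s!}\sum_{T\in\mT_{d+1}(\nn')}\Bigg(\prod_v \GG_{t(v),\cc_v}\Bigg)\prod_v \prod_{s\in[d]}c_{v,s}!,
\]
where $\cc_v$ is the indegree type of $v$.

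Next, I group the trees by their indegree vector $\ga=(\ga_{s,t,i})$ and apply Proposition~\ref{prop:nb-multitype-Cayley}. The factor $\prod_v\prod_s c_{v,s}!=\prod_{s,t,i}\ga_{s,t,i}!$ cancels the matching denominator of $|\mT_{d+1,\ga}|$, and the ratio $\prod_{t\in[d+1]}(n'_t-1)!/\prod_{t\in[d]} n_t! = 1/\prod_{t\in[d]}n_t$ (using $n'_{d+1}=1$) yields
\[
[\xx^\nn]\GG_{d+1}(\ff)=\frac{1}{\prod_{t\in[d]}n_t}\sum_{A\in\Caydp}\sum_{\ga}\Bigg(\prod_{(s,t)\in A}m_{s,t}\Bigg)\prod_{t\in[d+1],\,i\in[n'_t]}\GG_{t,\cc_{t,i}},
\]
with $m_{s,t}=\sum_i\ga_{s,t,i}$ and $\cc_{t,i}=(\ga_{1,t,i},\ldots,\ga_{d,t,i})$. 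For each fixed $A$, I recognise the inner sum as a coefficient extraction: using independent variable-copies $\xx^{(t)}$ for each type $t$ (and collapsing $\xx^{(t)}=\xx$ at the end), the sum $\sum_\ga \prod_{t,i}\GG_{t,\cc_{t,i}}$ equals $[\xx^\nn]\prod_{t\in[d+1]} \GG_t(\xx)^{n'_t}$, and each factor $m_{s,t}$ is produced by applying the Euler operator $x_s\partial_{x_s}$ (which multiplies a monomial by its $x_s$-degree) to $\GG_t(\xx)^{n'_t}$ for the edge $(s,t)\in A$. Since the children of any $t$ in $A$ carry distinct types, the Euler operators commute and split as $\bigl(\prod_{(s,t)\in A}x_s\bigr)\bigl(\prod_{(s,t)\in A}\partial_{x_s}\bigr)$; as every $s\in[d]$ is the first coordinate of exactly one edge of $A$ (since $A$ is a rooted Cayley tree on $[d+1]$), the $x_s$-factors assemble into $\prod_{s\in[d]}x_s$, which combines with the prefactor $1/\prod_t n_t$ to give $\prod_{t\in[d]}x_t/n_t$ and reproduces the stated formula.

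The main obstacle is the first step: the ordered-versus-unordered sibling distinction introduces the factors $\prod_v\prod_s c_{v,s}!$ and $\prod_{s\in[d]}n_s!$ that are essential for the subsequent cancellations against Proposition~\ref{prop:nb-multitype-Cayley}. Once that combinatorial reformulation is correctly set up, the rest is generating-function bookkeeping.
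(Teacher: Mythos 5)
Your proposal is correct and follows essentially the same route as the paper: the paper first proves the tree interpretation of $f_\rho$ as a separate lemma (Lemma~\ref{lem:interpretation-trees}, with exactly the $\bigl(\prod_t n_t!\bigr)$-to-$\bigl(\prod \ch_s(t,i)!\bigr)$ ordered/unordered correspondence you describe), then applies Proposition~\ref{prop:nb-multitype-Cayley} to the profile $\nn'=(n_1,\ldots,n_d,1)$, cancels the factorials to leave $1/\prod_t n_t$, and converts the $m_{s,t}$ factors into the operators $x_s\partial_{x_s}$ using $\prod_{(s,t)\in A}x_s=\prod_{t\in[d]}x_t$. All of your intermediate identities match the paper's.
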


Observe that the sum over $\Caydp$ in Theorem \ref{thm:multivariate-Lagrange} could be written as a determinant of differential operators using the matrix-tree theorem. In~\cite{Bender-Richmond:multivariate-Lagrange} it was proved that Theorem~\ref{thm:multivariate-Lagrange} is equivalent to some more traditional forms of the multivariate Lagrange inversion formula. This formulation was actually already implicit in~\cite{Goulden-Kulkarni:multivariate-Lagrange}. A combinatorial proof of Theorem~\ref{thm:multivariate-Lagrange} was first given in~\cite{Leroux:multivariate-Lagrange}. Combinatorial proofs of other forms of the multivariate Lagrange inversion formula were given in~\cite{Ehrenborg-Mendez:multivariate-Lagrange,Gessel:multivariate-Lagrange}. 

We will now prove Theorem~\ref{thm:multivariate-Lagrange} starting from Proposition~\ref{prop:nb-multitype-Cayley}. The existence and uniqueness of power series $f_1,\ldots,f_d$ satisfying~\eqref{eq:Lagrange-setting} is clear since their coefficients can be determined inductively from these equations. We now give an interpretation of these series as generating functions of trees (which is equivalent to what is done for instance in~\cite[Section 1]{Goulden-Kulkarni:multivariate-Lagrange}).
\begin{lemma}\label{lem:interpretation-trees}
For any tuple $\nn=(n_1,\ldots,n_d)$ of non-negative integers, and for all $\rho\in[d]$, the series $f_\rho$ defined by~\eqref{eq:Lagrange-setting} satisfies
$$ [\xx^{\nn}]f_\rho=\left(\prod_{t\in[d]}\frac{1}{n_t!}\right)\Bigg(\sum_{T\in\mT_\rho(\nn)}\GG_{\ch(T)}\prod_{s,t\in[d],i\in[n_t]}\ch_s(t,i)!\Bigg),$$
where $\mT_\rho(\nn)$ is the set of multitype Cayley trees defined in Section~\ref{sec:multitype-Cayley}, and
%$$\GG_{\ch(T)}=\prod_{v \textrm{ vertex of } T}[y_1^{\ch_1(v)}\cdots y_d^{\ch_d(v)}]\,\GG_{\type(v)}(y_1,\ldots,y_d).$$
$$\GG_{\ch(T)}=\prod_{t\in[d],i\in[n_t]}[y_1^{\ch_1(t,i)}\cdots y_d^{\ch_d(t,i)}]\,\GG_t(y_1,\ldots,y_d).$$
\end{lemma}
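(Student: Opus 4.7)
The plan is to verify that the power series $h_\rho$ defined by the right-hand side of the claimed identity satisfies the functional equation $h_\rho=x_\rho\GG_\rho(h_1,\ldots,h_d)$. Since this system of equations has a unique solution (as observed just before the statement of the lemma), this forces $h_\rho=f_\rho$.

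Writing $g_{t,\cc}:=[y_1^{c_1}\cdots y_d^{c_d}]\GG_t$ and setting $w(T):=\GG_{\ch(T)}\prod_{s,t,i}\ch_s(t,i)!$, the weight factors over vertices as $w(T)=\prod_{v\in V(T)}g_{t(v),\ch(v)}\prod_s\ch_s(v)!$ by the definition of $\GG_{\ch(T)}$. Extracting the coefficient of $\xx^\nn$ from the desired equation $h_\rho=x_\rho\GG_\rho(h_1,\ldots,h_d)$ reduces the problem to the combinatorial identity
$$\frac{1}{\prod_t n_t!}\sum_{T\in\mT_\rho(\nn)}w(T)=\sum_{\cc}g_{\rho,\cc}\sum_{(\nn^{s,j})}\frac{1}{\prod_{s,j,t}n_t^{s,j}!}\sum_{(T_{s,j})}\prod_{s,j}w(T_{s,j}),$$
where $\cc=(c_1,\ldots,c_d)$, the sum $\sum_{(\nn^{s,j})}$ is over ordered tuples of profiles indexed by $s\in[d],\,j\in[c_s]$ summing to $\nn-e_\rho$, and $(T_{s,j})\in\prod_{s,j}\mT_s(\nn^{s,j})$.

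I would establish this identity by a root-decomposition. Given $T\in\mT_\rho(\nn)$, write the root as $(\rho,r)$ with $r\in[n_\rho]$ and indegree vector $\cc$. Removing the root produces, for each $s$ and each of the $c_s$ type-$s$ children, a subtree whose type-$t$ vertices, after being canonically relabeled by $1,\ldots,n_t^{s,j}$, form an element $T_{s,j}\in\mT_s(\nn^{s,j})$. The weight factorization gives $w(T)=g_{\rho,\cc}\cdot(\prod_s c_s!)\cdot\prod_{s,j}w(T_{s,j})$, because the root contributes $g_{\rho,\cc}\prod_s c_s!$ and the remaining vertices are partitioned among the subtrees. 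The enumerative input is that for each fixed datum $(\cc,(\nn^{s,j}),(T_{s,j}))$ (with $(T_{s,j})$ an ordered tuple), the number of trees $T\in\mT_\rho(\nn)$ producing this data equals
$$\frac{n_\rho\prod_t(n_t-\delta_{t,\rho})!}{\prod_s c_s!\cdot\prod_{s,j,t}n_t^{s,j}!},$$
where $n_\rho$ selects the root label, $\prod_t(n_t-\delta_{t,\rho})!$ counts all bijections between the remaining type-$t$ original labels and the canonical positions spread across the subtrees, $1/\prod_s c_s!$ kills the spurious ordering of same-type siblings, and $1/\prod_{s,j,t}n_t^{s,j}!$ identifies those choices of canonical labeling of an individual $T_{s,j}$ which, together with a compensating change of the label bijection, yield the same overall $T$.

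Combining the weight factorization with the count and invoking the identity $n_\rho\prod_t(n_t-\delta_{t,\rho})!=\prod_t n_t!$ produces exactly the targeted equality. The main obstacle is getting the last over-counting factor $\prod_{s,j,t}n_t^{s,j}!$ right: one is tempted to just enumerate label bijections, but must notice that distinct canonical labelings of a single subtree paired with suitably modified label bijections reconstruct the same $T$, which is precisely what generates the denominator $\prod_{s,j,t}n_t^{s,j}!$ on the right-hand side and matches the expected $1/\prod_{s,j}\prod_t n_t^{s,j}!$ normalization coming from $h_s^{c_s}$ in the expansion of $\GG_\rho(h_1,\ldots,h_d)$.
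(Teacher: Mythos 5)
Your proof is correct, and it rests on the same two pillars as the paper's: uniqueness of the solution of the system $f_t=x_t\GG_t(f_1,\ldots,f_d)$, and decomposition of a tree at its root. The difference is in the bookkeeping. The paper first trades $\mT_\rho(\nn)$ for the class $\mU_\rho(\nn)$ of unlabeled trees in which same-type siblings are totally ordered, via a $\bigl(\prod_t n_t!\bigr)$-to-$\bigl(\prod_{s,t,i}\ch_s(t,i)!\bigr)$ correspondence; this single step absorbs exactly the factors $1/n_t!$ and $\ch_s(t,i)!$ appearing in the statement, and on $\mU_\rho$ root deletion becomes a genuine bijection $\mU_{\rho,(c_1,\ldots,c_d)}\simeq\mU_1^{c_1}\times\cdots\times\mU_d^{c_d}$ with no multiplicities left to track. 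You instead stay with labeled trees throughout and compute the multiplicities of the root-deletion correspondence directly; your factor $n_\rho\prod_t(n_t-\delta_{t,\rho})!\big/\bigl(\prod_s c_s!\prod_{s,j,t}n_t^{s,j}!\bigr)$ is the correct ratio of fiber sizes of the two projections from the set of pairs (tree, representation), and since the weight $w$ is constant on both kinds of fibers the double count goes through. One caveat of phrasing: that quantity is not literally ``the number of trees producing this data'' --- a single $T$ arises from many data, via the $\prod_s c_s!\prod_{s,j,t}n_t^{s,j}!$ sibling reorderings and canonical relabelings --- but a weighted (fiber-ratio) count; your closing paragraph shows you understand this, so it is a matter of presentation rather than a gap. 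The trade-off is clear: the paper's route is shorter because all the messy symmetry factors are quarantined in one correspondence at the outset, while yours avoids introducing the auxiliary class and makes the multinomial origin of the $1/\prod_t n_t!$ normalization completely explicit.
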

\begin{proof}
For all $\rho\in[d]$, we define the power series $\wf_\rho$ in the variables $x_1,\ldots,x_d$ by
$$\wf_\rho:=\sum_{n_1,\ldots,n_d\geq 0}\Bigg(\prod_{t\in[d]}\frac{x_t^{n_t}}{n_t!}\Bigg)\Bigg(\sum_{T\in\mT_\rho(\nn)}\GG_{\ch(T)}\prod_{s,t\in[d],i\in[n_t]}\ch_s(t,i)!\Bigg).$$
We want to prove that $\wf_\rho=f_\rho$. 
%For this it suffices to prove that $\wf_1,\ldots,\wf_d$ satisifies~\eqref{eq:Lagrange-setting}.
Let $\mU_\rho(\nn)$ be the set of rooted trees obtained by taking a tree $T$ in $\mT_\rho(\nn)$, unlabeling its vertices, and for each vertex $v$ and each type $s\in[d]$ assigning a total order to the children of $v$ of type $s$. This induces a $\left(\prod_{t\in[d]}n_t!\right)$-to-$\left(\prod_{s,t\in[d],i\in[n_t]}\ch_s(t,i)!\right)$ correspondence between the sets $\mT_\rho(\nn)$ and $\mU_\rho(\nn)$. 
Thus, 
\begin{eqnarray}
\displaystyle \wf_\rho~=~\sum_{n_1,\ldots,n_d\geq 0}\xx^\nn\sum_{T\in\mU_\rho(\nn)}\GG_{\ch(T)}~=~\sum_{T\in\mU_\rho}~ \prod_{v \textrm{ vertex of } T}\om(v), \label{eq:simplef}
\end{eqnarray}
where $\displaystyle \mU_\rho=\bigcup_{n_1,\ldots,n_d\geq 0} \mU_\rho(\nn)$, and for a vertex $v$ of type $t$, 
$$\om(v):=x_t \cdot [y_1^{\ch_1(v)}\cdots y_d^{\ch_d(v)}]\, \GG_t(y_1,\ldots,y_d).$$

We now consider the classical decomposition of trees obtained by deleting the root vertex (see for instance~\cite{Flajolet:analytic}).
Let $\mU_{\rho,(c_1,\ldots,c_d)}$ be the set of trees in $\mU_\rho$ such that the root vertex has $c_s$ children of type $s$ for all $s\in[d]$. By deleting the root vertex of the trees in $\mU_{\rho,(c_1,\ldots,c_d)}$, one gets a bijection between $\mU_{\rho,(c_1,\ldots,c_d)}$ and the Cartesian product 
${\mU_1}^{c_1}\times \cdots \times{\mU_d}^{c_d}$. 
This gives the following generating function equation 
$$\sum_{T\in\mU_{\rho,(c_1,\ldots,c_d)}}~\prod_{v \textrm{ vertex of } T}\om(v)=x_\rho\cdot \Big([y_1^{c_1}\cdots y_d^{c_d}]\,\GG_{\rho}(y_1,\ldots,y_d)\Big) \cdot \wf_1^{c_1}\cdots \wf_d^{c_d}.$$
Thus~\eqref{eq:simplef} becomes 
$$\wf_\rho\,=\,x_\rho\sum_{c_1,\ldots,c_d\geq 0}\Big([y_1^{c_1}\cdots y_d^{c_d}]\,\GG_{\rho}(y_1,\ldots,y_d)\Big)\, \wf_1^{c_1}\cdots \wf_d^{c_d}\,=\,x_\rho\GG_\rho(\wf_1,\ldots,\wf_d).$$
Since $\wf_1,\ldots,\wf_d$ are power series satisfying~\eqref{eq:Lagrange-setting}, they are equal to $f_1,\ldots,f_d$ respectively.
\end{proof}

We define $f_{d+1}=x_{d+1}\GG_{d+1}(f_1,\ldots,f_d)$, $\xx'=(x_1,\ldots,x_{d+1})$, $n_{d+1}=1$, and $\nn'=(n_1,\ldots,n_{d+1})$. With this notation, Equation~\eqref{eq:Lagrange-setting} is valid for all $t\in[d+1]$, and $[\xx^{\nn}]\GG_{d+1}(f_1,\ldots,f_d)=\big[{\xx'}^{\nn'}\big]f_{d+1}$. Hence applying Lemma~\ref{lem:interpretation-trees} gives 
\begin{equation}\label{eq:interpret-trees}
[\xx^{\nn}]\GG_{d+1}(f_1,\ldots,f_d)=\Bigg(\prod_{t\in[d]}\frac{1}{n_t!}\Bigg)\Bigg(\sum_{T\in\mT_{d+1}(\nn')}\GG_{\ch(T)}\prod_{s,t\in[d+1],i\in[n_t]}\ch_s(t,i)!\Bigg).
\end{equation}
We now partition the set of trees $\mT_{d+1}(\nn')$ according to the number $m_{s,t}$ of edges of type $(s,t)$. Let $M(\nn)$ be the set of tuples of non-negative integers $(m_{s,t})_{s,t\in[d+1]}$ such that $\sum_{t\in[d+1]}m_{s,t}=n_s$ for all $s\in[d]$, and $m_{d+1,t}=0$ for all $t\in[d+1]$. For a tuple $\mm\in M(\nn)$, let $\Ga(\mm)$ be the set of tuples $(\ga_{s,t,i})_{s,t\in[d+1],i\in [n_t]}$ of non-negative integers such that $\sum_{i\in[n_t]}\ga_{s,t,i}=m_{s,t}$ for all $s,t\in[d+1]$. Since $\bigcup_{\mm\in M(\nn)}\Ga(\mm)$ is the set of all possible indegree vectors for the trees in $\mT_{d+1}(\nn')$, we can rewrite the right-hand side of~\eqref{eq:interpret-trees} as  
$$
\textup{RHS}=\Bigg(\prod_{t\in[d]}\frac{1}{n_t!}\Bigg)\Bigg(\sum_{\mm \in M(\nn)}\sum_{\ga \in \Ga(\mm)} |\mT_{d+1,\ga}|\,\GG_\ga \prod_{s,t\in[d+1],i\in[n_t]}\ga_{s,t,i}! \Bigg),
$$
where 
$$\GG_{\ga}=\prod_{t\in[d+1],i\in[n_t]}[x_1^{\ga_{1,t,i}}\cdots x_d^{\ga_{d,t,i}}]\GG_t(\xx).$$
Using Proposition~\ref{prop:nb-multitype-Cayley} then gives
\begin{equation}\label{eq:RHS}
\textup{RHS}=\Bigg(\prod_{t\in[d]}\frac{1}{n_t}\Bigg)\Bigg(\sum_{\mm \in M(\nn)}\sum_{\ga \in \Ga(\mm)} \GG_\ga \sum_{A\in\Caydp~}\prod_{(s,t)\in A}m_{s,t}\Bigg).
\end{equation}
Now observe that by definition of $\Gamma(\mm)$, 
$$\sum_{\ga \in \Ga(\mm)} \GG_\ga =\prod_{t\in[d+1]}[x_1^{m_{1,t}}\cdots x_d^{m_{d,t}}]\GG_t(\xx)^{n_t}.$$ 
Hence for any Cayley tree $A\in\Caydp$,
$$\sum_{\ga \in \Ga(\mm)} \GG_\ga \prod_{(s,t)\in A}m_{s,t}=\prod_{t\in[d+1]}[x_1^{m_{1,t}}\cdots x_d^{m_{d,t}}]\Bigg(\prod_{(s,t)\in A}x_s\frac{\partial}{\partial x_s}\Bigg)\GG_t(\xx)^{n_t},$$
and by definition of $M(\nn)$,
$$\sum_{\mm \in M(\nn)}\sum_{\ga \in \Ga(\mm)} \GG_\ga\prod_{(s,t)\in A}m_{s,t} = [\xx^\nn]\prod_{t\in[d+1]}\Bigg(\prod_{(s,t)\in A}x_s\frac{\partial}{\partial x_s}\Bigg)\GG_t(\xx)^{n_t}.$$
Moreover, $\displaystyle \prod_{(s,t)\in A}x_s=\prod_{t\in[d]}x_t$. Thus~\eqref{eq:RHS} gives 
\begin{eqnarray*}
\textup{RHS}
&=&[\xx^\nn]\Bigg(\prod_{t\in[d]}\frac{x_t}{n_t}\Bigg)\Bigg(\sum_{A\in\Caydp~}\prod_{t\in[d+1]}\Bigg(\prod_{(s,t)\in A}\frac{\partial}{\partial x_s}\Bigg)\GG_t(\xx)^{n_t}\Bigg).
\end{eqnarray*}
This completes the proof of Theorem~\ref{thm:multivariate-Lagrange}.\\

\section{Enumeration of plane trees and cacti}\label{sec:cacti}
In this section we investigate the enumeration of plane trees and cacti using the same philosophy behind the proof of Theorem~\ref{prop:nb-multitype-Cayley}. Recall that a \emph{rooted plane tree} is a rooted tree in which the children of each vertex are ordered. A \emph{vertex-labeled} rooted plane tree is a rooted plane tree in which the $n$ vertices receive distinct labels in $[n]$; equivalently it is a rooted Cayley tree in which the children of each vertex are ordered. 

As mentioned in the introduction, the problem of counting rooted plane trees is equivalent to the problem of counting rooted Cayley trees. Let us illustrate our point by enumerating rooted (unitype) plane trees having $N_i$ vertices with $i$ children. By the case $d=1$ of~\eqref{eq:nb-multitype-Cayley}, for any tuple of non-negative integers $\ga=(\ga_1,\ldots,\ga_n)$ summing to $n-1$, the number of rooted Cayley trees with $n$ vertices in which vertex~$i$ has $\ga_i$ children for all $i\in[n]$ is
%$$|\mT^\bullet_{\ga}|=
$${n-1 \choose \ga_1,\ga_2,\ldots,\ga_n}.$$ 
This means that the set $\mP_{\ga}$ of vertex-labeled rooted plane trees in which vertex $i$ has $\ga_i$ children has cardinality
\begin{equation}\label{eq:plane-gamma}
|\mP_{\ga}|=(n-1)!.
\end{equation}
Let $\NN=(N_0,N_1,\ldots)$ be a sequence of non-negative integers summing to $n$, and let $\mL^\NN$ be the set of vertex-labeled rooted plane trees having $N_i$ vertices with $i$ children for all $i\geq 0$. This set is empty unless $\sum_{i\geq 0} N_i=1+\sum_{i\geq 0}i\,N_i$. Moreover, in this case from~\eqref{eq:plane-gamma} we immediately get
$$
|\mL^\NN|=(n-1)!{n \choose N_0,N_1,N_2,\ldots}.$$
Now, since a rooted plane tree has no non-trivial automorphisms, there are $n!$ ways of labeling its $n$ vertices. Thus the number of \emph{unlabeled} rooted plane trees having $N_i$ vertices with $i$ children for all $i$ is 
\begin{equation}\label{eq:plane-N}
|\mP^\NN|=\frac{1}{n}{n \choose N_0,N_1,N_2,\ldots},
\end{equation}
if $\sum_{i\geq 0} N_i=1+\sum_{i\geq 0}i\,N_i$ and 0 otherwise. Classically, this result is obtained using \L ukasiewicz words and the Cycle Lemma (see e.g.~\cite[Section 5.3]{EC2}).

We could also have proved~\eqref{eq:plane-N} directly, that is, without using~\eqref{eq:nb-multitype-Cayley}. Indeed, by a reasoning analogous to Lemma~\ref{lem:symmetry-Cayley-multitype} one easily shows that $|\mP_{\ga}|=|\mP_{\ga'}|$ for any tuples $\ga=(\ga_1,\ldots,\ga_n)$ and $\ga'=(\ga_1',\ldots,\ga_n')$ of non-negative integers summing to $n-1$. This, in turn, immediately gives~\eqref{eq:plane-gamma}. We will not dwell further into this parallel approach for counting plane trees.

In the rest of this section we deal with the enumeration of planar cacti. We give precise definitions below, but roughly speaking, a (planar) $d$-cactus is a tree-like structure in which edges are replaced by $d$\emph{-gons} (polygons with $d$ sides).
Examples are shown in Figure~\ref{fig:example-cacti}. We shall enumerate planar $d$-cacti according to the degree distribution of their vertices. This enumeration is equivalent to the computation of certain connection coefficients in the symmetric group (corresponding to the ``minimal'' factorizations of the long cycle into $d$ permutations, see~\cite{GJ92} or~\cite[Section 1.3]{LZ}). The results we obtain are not new: they have been obtained by the Lagrange inversion formula in \cite{GJ92, BBLL}, representation theory~\cite{J}, and even combinatorial methods~\cite{Leroux:multivariate-Lagrange}. Our contribution is to give a shorter proof, which takes advantage of the symmetries in the counting formulas for $d$-cacti.

\fig{width=.8\linewidth}{example-cacti}{Examples of rooted planar $d$-cacti for $d=2,3,4$. In all figures, the root $d$-gon is colored in black.}

We fix an integer $d\geq 2$. A $d$\emph{-graph} is a connected simple graph with $d$ types of vertices $1,2\ldots, d$, such that every 2-connected component is a cycle of length $d$ with vertices of type $1,2\ldots, d$ appearing in this order cyclically. Each such cycle is called a \emph{$d$-gon}. A \emph{plane embedding} of a $d$-graph is a drawing of $d$-graph in the plane without edge-crossings in such a way that every edge is incident to the unbounded region, and around each $d$-gon the vertices of type $1,2,..,d$ appear in clockwise order. A (planar) $d$-cacti is a plane-embedding of a $d$-graph considered up to continuous deformation (this is equivalent to fixing for each vertex $v$ the cyclic order of its incident $d$-gons). The \emph{size} of a $d$-cactus is its number of $d$-gons. Note that a $d$-cactus of size $n$ has $dn$ edges and, by the Euler relation, $(d-1)n+1$ vertices. The {\em degree} of a vertex is the number of incident $d$-gons. 

A $d$-cactus is \emph{rooted} if one of the $d$-gons is distinguished as the \emph{root $d$-gon}. Let $\nn=(n_1\ldots,n_d)$ be a tuple of positive integers. A $d$-cactus with $n_t$ vertices of type $t$ for all $t\in[d]$ is said to be {\em vertex-labeled} if the vertices of type $t$ have distinct labels in $[n_t]$ for all $t\in[d]$. The labeled vertices are denoted by $(t,i)$ where $t\in [d]$ is the type and $i \in [n_t]$ is the label. We denote by $\mC(\nn)$ the set of rooted vertex-labeled planar $d$-cacti with $n_t$ vertices of type $t$.
As mentioned earlier, the set $\mC(\nn)$ is empty unless $n:=\left(\sum_{t\in[d]} n_t -1 \right)/(d-1)$ is an integer which is the size of the cacti. Moreover we must impose $n_t\leq n$ for all $t\in[d]$ and we now suppose that these conditions hold.
For a tuple of integers $\NN=(N_{t,j})_{t\in[d],j>0}$ we denote by $\mC^\NN(\nn)$ the set of $d$-cacti in $\mC(\nn)$ having $N_{t,j}$ vertices of type $t$ and degree $j$ for all $t\in[d],j>0$. Of course this set is empty unless 
$$\sum_{j>0} N_{t,j}=n_t,~~ \textrm{ and } ~~\sum_{j>0} j\,N_{t,j}=n$$
for all $t\in[d]$. In~\cite{GJ92} Goulden and Jackson proved that under these conditions 
\begin{equation}\label{eq:GJ92}
|\mC^\NN(\nn)|=n^{d-1}\, \frac{\prod_{t\in[d]} n_t!(n_t-1)!}{\prod_{t\in[d],j>0}N_{t,j}!}.
\end{equation}
Actually the result in~\cite{GJ92} is stated in terms of rooted $d$-cacti with \emph{unlabeled} vertices, accounting for a factor $\prod_{t\in[d]} n_t!$ between~\eqref{eq:GJ92} and~\cite[Theorem 3.2]{GJ92}. %The proof used there was through the multivariate Lagrange inversion formula.

We will now give a proof of~\eqref{eq:GJ92} using a philosophy similar to the one developed in previous sections. For a cactus $C\in\mC(\nn)$, we denote by $(t,i)$ the vertex of type $t$ labeled $i$.
For a tuple $\ga=(\ga_{t,i})_{t\in[d],i\in[n_t]}$ of integers we say that $C$ has \emph{degree vector} $\ga$ if the vertex $(t,i)$ has degree $\ga_{t,i}$ for all $t\in[d],i\in[n_t]$. We denote by $\mC_\ga$ the set of $d$-cacti in $\mC(\nn)$ having degree vector $\ga$. Of course this set is empty unless $\displaystyle \sum_{i\in[n_t]}\ga_{t,i}=n$ for all $t\in[d]$ and $\ga_{t,i}>0$ for all $t\in[d],i>0$. An equivalent way of stating~\eqref{eq:GJ92} is as follows.

\begin{theorem}\label{thm:GJ}
Let $n_1\ldots,n_d$ be positive integers such that $n:=\left(\sum_{t\in[d]} n_t -1 \right)/(d-1)$ is an integer, and $n_t\leq n$ for all $t\in[d]$.
Let $\ga=(\ga_{t,i})_{t\in[d],i\in[n_t]}$ be a tuple of positive integers such that $ \sum_{i\in[n_t]}\ga_{t,i}=n$ for all $t\in[d]$. Then the number of rooted vertex-labeled $d$-cacti with degree vector $\ga$ is
\begin{equation}\label{GJcacti}
|\mC_{\ga}| = n^{d-1}\, \prod_{t\in[d]} (n_t-1)!.
\end{equation}
\end{theorem}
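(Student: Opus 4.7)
The strategy follows the symmetry philosophy developed throughout the paper: show that $|\mC_\ga|$ does not actually depend on $\ga$, then compute the common value. This is the correct shape because the claimed formula $n^{d-1}\prod_t(n_t-1)!$ is manifestly independent of the degree vector $\ga$, so what is needed is the strong symmetry $|\mC_\ga|=|\mC_{\ga'}|$ for any two valid degree vectors, rather than a weighted version as in Lemma~\ref{lem:symmetry-Cayley-multitype}.

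First I would establish a symmetry lemma: if $\ga,\ga'$ are two valid degree vectors differing only by $\ga'_{t,i}=\ga_{t,i}-1$ and $\ga'_{t,j}=\ga_{t,j}+1$ for some $t\in[d]$ and $i\neq j$ in $[n_t]$ with $\ga_{t,i}\geq 2$, then $|\mC_\ga|=|\mC_{\ga'}|$. The bijection I have in mind is the cactus analogue of $\widetilde{\Phi}$ from the proof of Lemma~\ref{lem:symmetry-Cayley-multitype}: for $C\in\mC_\ga$, the block-cut tree of $C$ provides a unique path between $(t,i)$ and $(t,j)$, and in particular there is a unique $d$-gon $f$ incident to $(t,i)$ lying on this path. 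The bijection detaches every $d$-gon other than $f$ that is incident to $(t,i)$ and reattaches it to $(t,j)$ (at a canonical position in the cyclic order), detaches every $d$-gon incident to $(t,j)$ and reattaches it to $(t,i)$ analogously, leaves $f$ in place, and finally swaps the labels $(t,i)\leftrightarrow(t,j)$. A short check shows this is an involution, hence a bijection $\mC_\ga\to\mC_{\ga'}$. Note that no marking of a $d$-gon is required here, which is precisely why one obtains the unweighted equality $|\mC_\ga|=|\mC_{\ga'}|$.

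Iterating the symmetry yields $|\mC_\ga|=K$ for a constant $K$ depending only on $\nn$ and $d$ (any two valid degree vectors are connected by a sequence of single-unit transfers that preserve positivity of all entries). To determine $K$, I would specialize to the ``star'' vector $\ga^*$ with $\ga^*_{t,1}=n-n_t+1$ and $\ga^*_{t,i}=1$ for $i\geq 2$. In $\mC_{\ga^*}$ each non-hub vertex lies in exactly one $d$-gon, so such a cactus decomposes as (i)~a ``core'' rooted planar cactus on the hubs $\{(t,1)\}_{t\in[d]}$ with marked leaf-slots, and (ii)~an assignment of the $n_t-1$ leaves of type $t$ to the corresponding slots, contributing $\prod_t(n_t-1)!$. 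A separate enumeration of cores should yield $n^{d-1}$ (for instance via an adaptation of Corollary~\ref{cor:multitype-forest} applied to the bipartite block-cut tree of the cactus), giving $K=n^{d-1}\prod_t(n_t-1)!$. The main obstacle is the planar bookkeeping in the symmetry step: the cyclic orders of $d$-gons around each vertex must be coherently updated when detaching and reattaching. Following the analogy with $\widetilde{\Phi}$, a natural convention is to insert each moved $d$-gon immediately next to the $d$-gon that, in the original cactus, was its neighbour along the path in question; this preserves the local planar structure and makes the map manifestly involutive, though verifying the details requires a short case analysis along the block-cut path.
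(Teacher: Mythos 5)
Your overall strategy --- establish the unweighted symmetry $|\mC_\ga|=|\mC_{\ga'}|$ and then evaluate the constant on the star degree vector --- is exactly the paper's, but the bijection you propose for the symmetry step does not work. The trick behind $\widetilde{\Phi}$ in Lemma~\ref{lem:symmetry-Cayley-multitype} relies on the parent/child asymmetry of a rooted tree: only the \emph{child} edges of $(t,i)$ and $(t,j)$ are exchanged, the two on-path edges (the marked edge at $(t,i)$ and the parent edge of $(t,j)$) stay put, and the $\pm 1$ in the indegrees comes precisely from the fact that parent edges are not counted. For cacti one counts the \emph{total} degree, and there is no such asymmetry to exploit. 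Concretely, write $f$ for the on-path $d$-gon at $(t,i)$ and $g$ for the on-path $d$-gon at $(t,j)$; these are distinct, since a $d$-gon contains only one vertex of type $t$. If your map leaves both $f$ and $g$ in place and swaps the remaining $d$-gons together with the labels, the physical vertex relabelled $(t,j)$ ends up with $1+(\ga_{t,j}-1)=\ga_{t,j}$ incident $d$-gons rather than $\ga_{t,j}+1$, so the map lands back in $\mC_\ga$, not in $\mC_{\ga'}$. If instead you also detach $g$ from $(t,j)$ and reglue it to $(t,i)$ so as to fix the count, you create a cycle in the block structure, because the component of $g$ obtained by cutting it off from $(t,j)$ already contains $(t,i)$ via the chain $f,\ldots,g$; the result is no longer a cactus. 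The paper's Lemma~\ref{lem:sym1} avoids all of this by moving a \emph{single} $d$-gon: the one following the chain $d$-gon clockwise around $(t,i)$ (which exists and is off the chain as soon as $\ga_{t,i}>1$), reglued in the canonical corner next to the chain $d$-gon at $(t,j)$. Planarity supplies the canonical choice that the marked edge supplied in the Cayley setting, so no marking, no case split, and no relabelling are needed.

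The evaluation of the constant is also left incomplete. You reduce $\mC_{\ga^*}$ to a ``core'' on the hubs times $\prod_{t}(n_t-1)!$ for the leaf labels, and assert that the cores number $n^{d-1}$; but that count --- a rooted planar arrangement of $n$ $d$-gons on $d$ hubs with prescribed hub degrees $n-n_t+1$, whose answer must turn out to be independent of the individual $n_t$ --- is itself a nontrivial claim requiring its own symmetry or bijection, and an ``adaptation of Corollary~\ref{cor:multitype-forest}'' (which concerns unordered Cayley forests, not planar structures) does not obviously supply it. The paper instead proves a second, weighted symmetry (Lemma~\ref{lem:sym2}) that transfers one vertex from type $r$ to type $s$ while staying within star degree vectors, reducing everything to the profile $(1,n,\ldots,n)$, where the cactus is a single fan of $n$ $d$-gons around the type-$1$ hub and the count $n!^{\,d-1}$ is immediate.
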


\begin{corollary}\label{cor:totalnb}
The number of rooted vertex-labeled $d$-cacti of size $n$ with $n_t$ vertices of type $t$ for all $t\in[d]$ is
$$|\mC(\nn)|=n^{d-1}\, \prod_{t\in[d]} \binom{n-1}{n_t-1}(n_t-1)!,$$
if $\left(\sum_{t\in[d]} n_t -1 \right)/(d-1)=n$, and 0 otherwise.
\end{corollary}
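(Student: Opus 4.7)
The plan is to derive Corollary \ref{cor:totalnb} directly from Theorem \ref{thm:GJ} by summing the cardinalities $|\mC_\ga|$ over all admissible degree vectors $\ga$. Since the right-hand side of \eqref{GJcacti} does not depend on $\ga$ itself (only on $\nn$ and $n$), the enumeration reduces to counting admissible degree vectors.

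First I would record the partition of $\mC(\nn)$ according to degree vectors:
$$\mC(\nn)=\bigsqcup_{\ga\in\mathcal{A}(\nn)}\mC_\ga,$$
where $\mathcal{A}(\nn)$ is the set of tuples $\ga=(\ga_{t,i})_{t\in[d],i\in[n_t]}$ of positive integers satisfying $\sum_{i\in[n_t]}\ga_{t,i}=n$ for every $t\in[d]$. This equality is well-defined precisely when $n=\left(\sum_{t\in[d]}n_t-1\right)/(d-1)$ is an integer and $n_t\leq n$ for each $t$; otherwise $\mC(\nn)$ is empty, giving the second half of the statement.

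Next I would count $|\mathcal{A}(\nn)|$. For each fixed $t\in[d]$, choosing the positive integers $\ga_{t,1},\ldots,\ga_{t,n_t}$ summing to $n$ is a standard compositions problem whose count is $\binom{n-1}{n_t-1}$ by stars and bars. Since the choices for different types are independent,
$$|\mathcal{A}(\nn)|=\prod_{t\in[d]}\binom{n-1}{n_t-1}.$$

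Finally I would apply Theorem \ref{thm:GJ}: every $\ga\in\mathcal{A}(\nn)$ contributes the same value $|\mC_\ga|=n^{d-1}\prod_{t\in[d]}(n_t-1)!$, so
$$|\mC(\nn)|=\sum_{\ga\in\mathcal{A}(\nn)}|\mC_\ga|=n^{d-1}\prod_{t\in[d]}\binom{n-1}{n_t-1}(n_t-1)!,$$
which is the desired formula. There is essentially no obstacle here; the only thing to be careful about is verifying that the positivity constraint $\ga_{t,i}>0$ is the right one (every vertex of a connected $d$-cactus lies on at least one $d$-gon, hence has degree at least one) so that the compositions count $\binom{n-1}{n_t-1}$ is indeed correct.
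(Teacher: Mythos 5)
Your proof is correct and follows exactly the paper's argument: partition $\mC(\nn)$ by degree vectors, note that the formula of Theorem~\ref{thm:GJ} is independent of $\ga$, and count the $\prod_{t\in[d]}\binom{n-1}{n_t-1}$ admissible compositions. No differences worth noting.
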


\begin{proof}[Proof of Corollary~\ref{cor:totalnb}]
The right-hand side of~\eqref{GJcacti} does not depend on $\ga$. 
And since $|\mC(\nn)| = \sum_{\ga} |\mC_{\ga}|$, then it suffices to observe that for all $t\in[d]$ there are $\binom{n-1}{n_t-1}$ tuples of positive integers such that $\sum_{i\in[n_t]}\ga_{t,i}=n$.
%Hence it suffices to observe that the number of tuples $\ga=(\ga_{t,i})_{t\in[d],i\in[n_t]}$ of positive integers such that $\displaystyle \sum_{i\in[n_t]}\ga_{t,i}=n$ for all $t\in[d]$ is $\prod_{t\in[d]} \binom{n-1}{n_t-1}$. 
\end{proof}

The rest of this section is devoted to the proof of Theorem~\ref{thm:GJ}. 
We start with an analogue of Lemma~\ref{lem:symmetry-Cayley-multitype}.

\begin{lemma} \label{lem:sym1}
Let $\nn=(n_1,\ldots,n_d)$ be a tuple of positive integers such that $n:=\left(\sum_{t\in[d]} n_t -1 \right)/(d-1)$ is an integer. 
Let $\ga=(\ga_{t,i})_{t\in[d],i\in[n_t]}$ and $\ga'=(\ga'_{t,i})_{t\in[d],i\in[n_t]}$ be tuples of positive of integers such that $ \sum_{i\in[n_t]}\ga_{t,i}=\sum_{i\in[n_t]}\ga'_{t,i}=n$ for all $t\in[d]$. %, where $n=\left(\sum_{t\in[d]} n_t -1 \right)/(d-1)$. 
Then the number of $d$-cacti in $\mC(\nn)$ having degree vector $\ga$ or $\ga'$ is the same: $\displaystyle |\mC_{\ga}| = |\mC_{\ga'}|$.
\end{lemma}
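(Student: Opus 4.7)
My plan mirrors the strategy of Lemma~\ref{lem:symmetry-Cayley-multitype}. First, I would reduce to the \emph{elementary} case where $\ga$ and $\ga'$ differ only in two positions $(t,i),(t,j)$ in the same row, with $\ga'_{t,i}=\ga_{t,i}-1\geq 1$ and $\ga'_{t,j}=\ga_{t,j}+1$. Any two valid tuples can be connected by a sequence of such moves (at each step pick $j$ with $\ga'_{t,j}>\ga_{t,j}$ and $i$ with $\ga_{t,i}>\ga'_{t,i}$, which automatically forces $\ga_{t,i}\geq 2$), so it suffices to construct a direct bijection $\Phi\colon \mC_{\ga}\to \mC_{\ga'}$ in this elementary case.

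I would construct $\Phi$ by a single local move dictated by the planar embedding. Given $C\in\mC_\ga$, consider the unique path in $C$ from $(t,i)$ to $(t,j)$, and let $G_0$ (respectively $G_k$) be the first $d$-gon of that path, which is incident to $(t,i)$ (respectively $(t,j)$). A crucial preliminary observation is that each $d$-gon contains exactly one vertex of each type, so $G_0\neq G_k$ and in particular $\ga_{t,i}\geq 2$ means that the cyclic sequence of $d$-gons around $(t,i)$ has at least one element other than $G_0$. Let $G^\star$ be the $d$-gon immediately clockwise of $G_0$ in this cyclic sequence. Then $\Phi(C)$ is obtained by detaching $G^\star$ from $(t,i)$ and reattaching it to $(t,j)$, inserted immediately counterclockwise of $G_k$ in the cyclic order around $(t,j)$. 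The resulting structure is a planar $d$-cactus whose degrees at $(t,i)$ and $(t,j)$ have changed by $-1$ and $+1$ respectively, so $\Phi(C)\in\mC_{\ga'}$.

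To show $\Phi$ is a bijection I would write the inverse as the symmetric operation. The essential remark is that moving $G^\star$ does not alter the path from $(t,i)$ to $(t,j)$, because $G^\star\neq G_0$ is not on that path in either $C$ or $C':=\Phi(C)$. Consequently $G_0$ and $G_k$ are canonically identified in $C'$ exactly as in $C$. The inverse $\Phi^{-1}$ then selects the $d$-gon immediately counterclockwise of $G_k$ at $(t,j)$ in $C'$ (which is $G^\star$, by construction of $\Phi$), detaches it from $(t,j)$, and reinserts it immediately clockwise of $G_0$ at $(t,i)$, recovering $C$.

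The main obstacle will be the planar bookkeeping: one must verify that the prescribed local move indeed yields a valid planar $d$-cactus, that it respects the required cyclic type order $1,2,\ldots,d$ on each $d$-gon (this is automatic since the moved $d$-gon's unique type-$t$ vertex simply switches its attachment from $(t,i)$ to $(t,j)$), and that the cyclic-order rules used by $\Phi$ and $\Phi^{-1}$ are genuinely inverse. Once these routine verifications are in place, iterating over elementary moves yields $|\mC_{\ga}|=|\mC_{\ga'}|$ for any two valid tuples.
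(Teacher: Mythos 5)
Your proposal is correct and follows essentially the same route as the paper: reduce to an elementary move changing two degrees in the same type by $\pm 1$, then transfer the $d$-gon immediately clockwise of the first $d$-gon of the chain joining the two vertices, using the fact that this chain is unchanged by the move to invert the operation. The only cosmetic differences are your clockwise/counterclockwise insertion convention (the paper arranges conventions so that $\Phi_{s,j,k}^{-1}=\Phi_{s,k,j}$) and that you should speak of the chain of $d$-gons determined by \emph{any} simple path rather than ``the unique path,'' since paths inside a single $d$-gon are not unique.
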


\fig{width=.8\linewidth}{sym1-cacti}{The bijection $\Phi_{s,j,k}$. The chain of $d$-gons between vertices $(s,j)$ and $(s,k)$ is drawn in thick lines.} 

\begin{proof} It suffices to prove~$|\mC_{\ga}| = |\mC_{\ga'}|$ when the tuples $\ga$ and $\ga'$ only differ on two coordinates. Let 
$s\in[d]$, $j,k\in[n_s]$ such that $\ga_{s,j}>1$, let $\ga'$ be defined by $\ga'_{s,j}=\ga_{s,j}-1$, $\ga'_{s,k}=\ga_{s,k}+1$ and $\ga'_{t,i}=\ga_{t,i}$ for all $(t,i)\notin\{(s,j), (s,k)\}$. We need to prove $|\mC_{\ga}| = |\mC_{\ga'}|$ and we proceed to exhibit a bijection $\Phi_{s,j,k}$ between $\mC_{\ga}$ and $\mC_{\ga'}$. The bijection $\Phi_{s,j,k}$ is illustrated in Figure~\ref{fig:sym1-cacti} (the root $d$-gon plays no role in this construction and is not indicated). 

Let $C$ be a cactus in $\mC_{\ga}$. Let $P$ be the \emph{chain of $d$-gons} between the vertices $(s,j)$ and $(s,k)$, that is, the set of $d$-gons containing the edges of any simple path between the vertices $(s,j)$ and $(s,k)$ (this set of $d$-gons is independent of the simple path considered); see Figure~\ref{fig:sym1-cacti}. 
Let $g_j$ (resp. $g_k$) be the $d$-gon in $P$ incident to the vertex $(s,j)$ (resp. $(s,k)$). Let $g_j'$ be the $d$-gon incident to the vertex $(s,j)$ following $g_j$ clockwise around $(s,j)$. Observe that $g_j'\neq g_j$ since the vertex $(s,j)$ has degree $\ga_{s,j}>1$.
We define $\Phi_{s,j,k}(C)$ as the cactus obtained from $C$ by ungluing the $d$-gon $g_j'$ from the vertex $(s,j)$ and gluing it to $(s,k)$ in the corner following the $d$-gon $g_j$ clockwise around $(s,k)$; see Figure~\ref{fig:sym1-cacti}. It is clear that $\Phi_{s,j,k}(C)$ is a cactus in $\mC_{\ga'}$. It is equally clear that $\Phi_{s,j,k}$ is a bijection between $\mC_{\ga}$ and $\mC_{\ga'}$, since the inverse mapping is $\Phi_{s,j,k}^{-1}=\Phi_{s,k,j}$. This completes the proof.
\end{proof}

Given Lemma~\ref{lem:sym1} it is sufficient to prove~\eqref{GJcacti} for the particular tuple $\ga^*(\nn)=(\ga_{t,i})_{t\in[d],i\in[n_t]}$ defined by $\ga_{t,1}=n-n_t+1$ and $\ga_{t,i}=1$ for all $t$ in $[d]$ and all $i>1$ in $[n_t]$. Cacti in $\mC_{\ga^*(\nn)}$ are shown in Figure~\ref{fig:star-cactus}.
We will now invoke a second symmetry in order to enumerate~$\mC_{\ga^*(\nn)}$.

\begin{lemma} \label{lem:sym2}
Let $\nn=(n_1,\ldots,n_d)$ be a tuple of positive integers. Let $r,s\in[d]$ be such that $n_r>1$, and $n_s<n$ and let $\nn'=(n'_1,\ldots,n'_d)$ be defined by 
$n_r'=n_r-1$, $n_s'=n_s+1$ and $n_t'=n_t$ for all $t\notin\{r,s\}$. Then %the number of star-cacti are related 
$$|\mC_{\ga^*(\nn)}|/(n_r-1)=|\mC_{\ga^*(\nn')}|/(n_s'-1).$$
\end{lemma}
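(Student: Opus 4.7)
Clearing denominators, the claim is $(n_s'-1)\,|\mC_{\ga^*(\nn)}|=(n_r-1)\,|\mC_{\ga^*(\nn')}|$, and my plan is to prove this equality bijectively, in the same spirit as Lemma~\ref{lem:symmetry-Cayley-multitype} and Lemma~\ref{lem:sym1}. Since a cactus in $\mC_{\ga^*(\nn)}$ has exactly $n_s=n_s'-1$ vertices of type~$s$ and a cactus in $\mC_{\ga^*(\nn')}$ has exactly $n_r-1$ vertices of type~$r$, both sides have natural combinatorial interpretations as counts of marked cacti: the left-hand side enumerates pairs $(C,w)$ with $C\in\mC_{\ga^*(\nn)}$ and $w$ any type-$s$ vertex of $C$, and the right-hand side enumerates pairs $(C',v)$ with $C'\in\mC_{\ga^*(\nn')}$ and $v$ any type-$r$ vertex of $C'$. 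The goal is to exhibit a bijection $\Phi$ between these two sets.

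Given $(C,w)$, the map $\Phi$ will act by a local surgery that transfers one $d$-gon-incidence from the type-$s$ hub to the type-$r$ hub while converting a type-$r$ leaf of $C$ into a new type-$s$ leaf $(s,n_s+1)$. The marked vertex $w$ selects the $d$-gon $g$ on which to act. If $w$ is a type-$s$ leaf, then $g$ is its unique incident $d$-gon. If $w$ is the type-$s$ hub, then $g$ is determined by a canonical walk along the chain of $d$-gons around $(s,1)$ (starting from the root $d$-gon in the clockwise order) until reaching the first $d$-gon whose type-$r$ vertex is a leaf. The surgery then replaces the type-$r$ leaf in $g$ by the type-$r$ hub $(r,1)$ and replaces the original type-$s$ vertex in $g$ by a new type-$s$ leaf labeled $n_s+1$; the new incidence at $(r,1)$ is inserted into the cyclic order around that vertex according to a clockwise convention analogous to the one used in Lemma~\ref{lem:sym1}. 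The distinguished type-$r$ vertex $v$ of $C'$ is then read off from the resulting surgery as the image of $w$.

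The hard part will be handling the case $w=(s,1)$ in a way that makes $\Phi$ invertible. This requires pinning down the canonical chain of $d$-gons unambiguously, so that the inverse operation starting from $(C',v)$ can recover both the original $d$-gon $g$ and the walk along which the surgery was performed; the convention for the cyclic insertion at the type-$r$ hub will be essential here, exactly as in Lemma~\ref{lem:sym1}. Once these conventions are fixed, verifying that $\Phi$ produces a cactus with degree profile $\ga^*(\nn')$ and that $\Phi$ is genuinely a bijection reduces to case-by-case checks analogous to those performed in the proofs of Lemmas~\ref{lem:symmetry-Cayley-multitype} and~\ref{lem:sym1}.
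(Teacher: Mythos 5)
Your reduction to $(n_s'-1)\,|\mC_{\ga^*(\nn)}|=(n_r-1)\,|\mC_{\ga^*(\nn')}|$ and the idea of interpreting the two sides as marked cacti is legitimate bookkeeping, but the surgery you describe does not produce an element of $\mC_{\ga^*(\nn')}$, and the marking scheme cannot be completed as stated. Concretely: when the marked vertex $w$ is a type-$s$ leaf, its unique incident $d$-gon $g$ has $w$ itself (not the hub $(s,1)$) as its type-$s$ corner, since a $d$-gon contains exactly one vertex of each type. Operating on $g$ therefore cannot lower the degree of $(s,1)$ from $n-n_s+1$ to $n-n_s'+1$, so the image fails to have degree vector $\ga^*(\nn')$; moreover the type-$r$ corner of this $g$ may well be the hub $(r,1)$ rather than a leaf, and replacing $w=(s,j)$ by a vertex labeled $n_s+1$ leaves a hole at label $j$. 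There is also a structural obstruction to the whole scheme: the fibers of the forgetful maps $(C,w)\mapsto C$ and $(C',v)\mapsto C'$ have sizes $n_s$ and $n_r-1$ respectively, which are in general different, so no bijection of marked pairs can act by a fixed cactus-level surgery combined with a rule sending marks to marks; the mark must genuinely feed back into the construction of the output cactus, and your proposal supplies no mechanism for that.

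The paper avoids marked pairs altogether. It reads $|\mC_{\ga^*(\nn)}|/(n_r-1)$ as the cardinality of the \emph{subset} ${\mC}^{s,r}_{\ga^*(\nn)}$ of cacti in which one canonically chosen type-$r$ leaf carries the largest label $n_r$; this is an honest $1/(n_r-1)$ fraction because the $n_r-1$ type-$r$ leaf labels are interchangeable. The canonical leaf is located via the chain of $d$-gons between the hubs $(r,1)$ and $(s,1)$: if $g_s$ is the unique $d$-gon of that chain at $(s,1)$ and $g_s'$ is the next $d$-gon clockwise around $(s,1)$ (it exists because $n_s<n$, and it is not in the chain, so its type-$r$ corner $v$ is forced to be a leaf), then $v$ is that corner. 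A single transfer of $g_s'$ from the hub $(s,1)$ to the hub $(r,1)$, absorbing $v$ into $(r,1)$ and releasing a new type-$s$ leaf labeled $n_s+1$, is then a bijection onto ${\mC}^{r,s}_{\ga^*(\nn')}$ with inverse $\Psi_{s,r}$. If you wish to keep the marked-pair formulation you must invent an explicit rule converting the $n_s$ possible marks on the source side into the $n_r-1$ possible marks on the target side; that is exactly the difficulty the label-fixing device is designed to dissolve, and it is the missing idea in your sketch.
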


\fig{width=.8\linewidth}{sym2-cacti}{The bijection $\Psi_{r,s}$. The chain of $d$-gons between vertices $(r,1)$ and $(s,1)$ is drawn in thick lines. The $d$-gon $g_s'$ is indicated in lighter shade. The vertex $v=(r,n_r)$ and $v'=(s,n_s+1)$.}

\begin{proof}
Let $C$ be a cactus in $\mC_{\ga^*(\nn)}$. Let $P$ be the \emph{chain of $d$-gons} between vertices $(r,1)$ and $(s,1)$ (the set of $d$-gons containing the edges of any simple path between these vertices). Let $g_s$ be the unique $d$-gon in $P$ incident to the vertex $(s,1)$ and let $g_s'$ be the $d$-gon incident to $(s,1)$ following $g_s$ clockwise around $(s,1)$. Observe that $g_s'\neq g_s$ since the vertex $(s,1)$ has degree $n-n_s+1>1$. Hence, the vertex $v$ of type $r$ incident to $g_s'$ has a label distinct from $1$, and has degree 1. We say that the cactus $C\in \mC_{\ga^*(\nn)}$ is in ${\mC}^{s,r}_{\ga^*(\nn)}$ if $v$ has label $n_r$. It is clear that $\displaystyle |{\mC}^{s,r}_{\ga^*(\nn)}|=|\mC_{\ga^*(\nn)}|/(n_r-1)$. Hence to prove the lemma it suffices to exhibit a bijection $\Psi_{r,s}$ between ${\mC}^{s,r}_{\ga^*(\nn)}$ and ${\mC}^{r,s}_{\ga^*(\nn')}$. This bijection is represented in Figure~\ref{fig:sym2-cacti}. 

Let $C$ be a cactus in ${\mC}^{s,r}_{\ga^*(\nn)}$. Let $P$, $g_s,g_s'$ and $v$ be as above. We also denote by $g_r$ be the unique $d$-gon in the chain $P$ incident to the vertex $(r,1)$. We then denote by $\Psi_{r,s}(C)$ the cactus obtained from $C$ by ungluing the $d$-gon $g_s'$ from the vertex $(s,1)$, and gluing it to the vertex $(r,1)$ in the corner following the $d$-gon $g_r$ around $(r,1)$; see Figure~\ref{fig:sym2-cacti}. In this process, the vertex $v$ becomes identified with the vertex $(s,1)$ (so that the label $n_r$ of $v$ disappears), while the vertex $v'$ of type $s$ incident to the $d$-gon $g_s'$ (previously identified with $(s,1)$) takes the label $n_s+1$. It is clear that $\Psi_{r,s}(C)$ is a cactus in ${\mC}^{r,s}_{\ga^*(\nn')}$. It is equally clear that $\Psi_{r,s}$ is a bijection between ${\mC}^{s,r}_{\ga^*(\nn)}$ and ${\mC}^{r,s}_{\ga^*(\nn')}$, since the inverse mapping is $\Psi_{r,s}^{-1}=\Psi_{s,r}$. This completes the proof.
\end{proof}

\fig{width=.7\linewidth}{star-cactus}{(a) A cactus in $\mC_{\ga^*(\nn)}$, for $\nn=(4,4,5)$. (b) A cactus in $\mC_{\ga^*(\nn)}$, for $\nn=(1,7,7)$. In both figures the root $d$-gon is colored in black and the numbers indicate the labels of vertices.}

By Lemma~\ref{lem:sym2}, we immediately get that for any tuple $\nn=(n_1,\ldots,n_d)$ in $[n]^d$ such that $n:=\left(\sum_{t\in[d]} n_t -1 \right)/(d-1)$ is an integer,
$$|\mC_{\ga^*(\nn)}|=\frac{\prod_{t\in[d]} (n_t-1)!}{(n-1)!^{d-1}}\,|\mC^*_{\ga^*(1,n,n,\ldots,n)}|.$$
Moreover the set of cacti $\mC^*_{\ga^*(1,n,n,\ldots,n)}$ is very easy to enumerate. Indeed this is the set of cacti where the unique vertex of type 1 has degree $n$ and all the other vertices have degree 1; see Figure~\ref{fig:star-cactus}(b). The only freedom left in such a cactus are the labels of the $n$ vertices of type $t$ for all $t\in\{2,\ldots,d\}$. This gives 
$$|\mC^*_{\ga^*(1,n,n,\ldots,n)}|=n!^{d-1},$$
hence $|\mC_{\ga^*(\nn)}|=n^{d-1}\prod_{t\in[d]} (n_t-1)!.$
Together with Lemma~\ref{lem:sym1} this completes the proof of Theorem~\ref{thm:GJ}.\\

\noindent\textbf{Acknowledgment:} We thank Mireille Bousquet-Mélou for pointing out an error in a previous version of this manuscript, Ira Gessel for mentioning reference~\cite{Knuth:enumeration-trees}, and to anonymous referees for valuable suggestions.
%\noindent\textbf{Acknowledgement:} We thank Mireille Bousquet-Mélou for invaluable comments on a previous version of this manuscript, Ira Gessel for pointing out reference~\cite{Knuth:enumeration-trees}, and an anonymous referee for useful suggestions.

\bibliographystyle{plain} 
\bibliography{biblio-trees} 
\label{sec:biblio} 
 
%% \small
%% \noindent Olivier Bernardi, Alejandro H. Morales, 
%% Department of Mathematics, Massachusetts Institute of Technology. 
%% Cambridge, MA USA 02139.\\
%% \{{\tt bernardi, ahmorales}\}{\tt @math.mit.edu}

\end{document}